\def\bi{\begin{align}}
\def\bin{\begin{align*}}
\numberwithin{equation}{section} \numberwithin{figure}{section}
\numberwithin{table}{section}
\newtheorem{thm}{Theorem}[section]
\newtheorem{cor}[thm]{Corollary}
\newtheorem{defn}[thm]{Definition}
\newtheorem{rem}[thm]{Remark}
\newtheorem{exa}{Example}[section]
\newcommand{\si}{\sigma}
\newcommand{\dif}{\mathrm{d}}
\newcommand{\ba}{\begin{array}}
\newcommand{\ea}{\end{array}}
\begin{document}

\begin{frontmatter}



\title{Symmetric integrators based on continuous-stage Runge-Kutta-Nystr\"{o}m methods for reversible
systems}

\author[a,b]{Wensheng Tang}
\ead{tangws@lsec.cc.ac.cn}
\address[a]{College of Mathematics and Statistics,\\
    Changsha University of Science and Technology,  Changsha 410114, China}
\address[b]{Hunan Provincial Key Laboratory of \\
    Mathematical Modeling and Analysis in Engineering,  Changsha 410114, China}
\author[c]{Jingjing Zhang\corref{cor1}}
\ead{jjzhang06@outlook.com}\cortext[cor1]{Corresponding author.}
\address[c]{School of Science, East China Jiaotong University, Nanchang 330013, China}
\author[]{}
\author[]{}

\begin{abstract}

In this paper, we study symmetric integrators for solving
second-order ordinary differential equations on the basis of the
notion of continuous-stage Runge-Kutta-Nystr\"{o}m methods. The
construction of such methods heavily relies on the Legendre
expansion technique in conjunction with the symmetric conditions and
simplifying assumptions for order conditions. New families of
symmetric integrators as illustrative examples are presented. For
comparing the numerical behaviors of the presented methods, some
numerical experiments are also reported.

\end{abstract}

\begin{keyword}
Continuous-stage Runge-Kutta-Nystr\"{o}m methods; Reversible
systems; Symmetric integrators; Simplifying assumptions; Legendre
polynomials.

\end{keyword}

\end{frontmatter}


\section{Introduction}
\label{}



Numerical integration that preserves at least one of geometric
properties of a given dynamical system has attracted much attention
in these years \cite{Fengq10sga,hairerlw06gni,sanzc94nhp}. As
suggested by Kang Feng \cite{Feng84ods,Fengq10sga}, it is natural to
look forward to those discrete systems which preserve as much as
possible the intrinsic properties of the continuous system --- this
is a truly ingenious idea for devising ``good" integrators to
properly simulate the evolution of various dynamical systems with
geometric features. It is evidenced that numerical methods with such
a special purpose can not only perform a more accurate long-time
integration than those traditional methods without any
geometric-feature preservation, but also produce an improved
qualitative behavior \cite{hairerlw06gni}. Such type of methods,
generally associated with the terminology ``geometric integration",
are distinguished by the geometric properties they inherit,
including symplectic methods for Hamiltonian systems, symmetric
methods for reversible systems, volume-preserving methods for
divergence-free systems, invariant-preserving methods for
conservative systems, multi-symplectic methods for Hamiltonian
partial differential equations etc. For more details, we refer the
interested readers to
\cite{Feng84ods,Fengq10sga,hairerlw06gni,sanzc94nhp,Hong05aso} and
references therein.

Reversible systems and reversible maps are of interest in both
aspects of theoretical study and numerical simulation for many
differential equations \cite{hairerlw06gni}. Let $\rho$ be an
invertible linear transformation in the phase space of a first-order
system given by\footnote{If the system is non-autonomous, we can
introduce an extra equation namely $\dot{t}=0$ to rewrite the
original system as an autonomous system.} $z'=f(z)$, then the system
is called $\rho$-reversible if \cite{hairerlw06gni}
$$\rho f(z)=-f(\rho z),\,\text{for}\;\forall\, z,$$
and a map $\phi(z)$ is called $\rho$-reversible if
$$\rho\circ\phi=\phi^{-1}\circ\rho.$$
Particularly, it is shown in \cite{hairerlw06gni} that all
second-order systems with the form $z''=f(z)$ are reversible as they
can be transformed into reversible first-order systems. In addition,
notice that the exact flow of a reversible system is a reversible
map, it is therefore natural to find a numerical method $\Phi_h$,
which is better referred to as a reversibility-preserving
integrator, such that it is also a reversible map (i.e.,
$\rho\circ\Phi_h=\Phi_h^{-1}\circ\rho$). It is known that a number
of symmetric integrators automatically possess this property, e.g.,
all symmetric Runge-Kutta (RK) methods, some partitioned Runge-Kutta
(PRK) methods for special partitioned systems, some composition and
splitting methods, and standard projection methods for differential
equations on special manifolds (see \cite{hairerlw06gni}, page 145).
To be specific, we quote the following result from
\cite{Stofferz95vsf}.
\begin{thm}\cite{Stofferz95vsf}\label{rever-preser}
A Runge-Kutta  method or a Runge-Kutta-Nystr\"{o}m (RKN) method is
reversible iff it is symmetric.
\end{thm}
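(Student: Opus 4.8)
The plan is to reduce both directions to a single chain of equivalences relating the numerical one-step map, its adjoint, and its conjugate by $\rho$. Write $\Phi_{h,f}$ for the map generated by the RK (respectively RKN) scheme with stepsize $h$ applied to (the first-order form of) the system with right-hand side $f$, and recall that the adjoint scheme is $\Phi_{h,f}^{*}:=(\Phi_{-h,f})^{-1}$, that \emph{symmetry} of the scheme means $\Phi_{h,f}^{*}=\Phi_{h,f}$, equivalently $\Phi_{h,f}\circ\Phi_{-h,f}=\mathrm{id}$, and that \emph{$\rho$-reversibility} of the scheme means $\rho\circ\Phi_{h,f}\circ\rho^{-1}=\Phi_{h,f}^{-1}$ whenever $f$ satisfies $\rho f(z)=-f(\rho z)$.

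The first step is the \emph{equivariance} of RK and RKN schemes under linear changes of variables: if $\rho$ is invertible and linear and $\widehat{f}(w):=\rho f(\rho^{-1}w)$, then $\rho\circ\Phi_{h,f}\circ\rho^{-1}=\Phi_{h,\widehat{f}}$. For an RK scheme this is immediate, since the stage equations and the update are affine in the unknowns with $f$ entering only through its values at the stages: applying $\rho$ transforms the defining system for $f$ started at $z_{0}$ into the defining system for $\widehat{f}$ started at $\rho z_{0}$ (using $\widehat{f}(\rho Z)=\rho f(Z)$, and, for $h$ small, uniqueness of the stages). For an RKN scheme one argues likewise in the partitioned variables $(y,v)$ with $\rho(y,v)=(y,-v)$, checking the internal-stage relations, the position update and the velocity update in turn, and using that $f=f(y)$ depends on the position alone.

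Next, $\rho$-reversibility of the continuous system, $\rho f(z)=-f(\rho z)$, says exactly that $\widehat{f}=-f$. For an RK scheme, since the stepsize and the vector field enter the defining relations only through the product $hf$, one has $\Phi_{h,-f}=\Phi_{-h,f}$, so the equivariance gives $\rho\circ\Phi_{h,f}\circ\rho^{-1}=\Phi_{h,\widehat{f}}=\Phi_{-h,f}$. For an RKN scheme the same conclusion follows from the direct substitution of the previous step: the velocity sign-flip built into $\rho$, combined with the parities of the powers $h$ and $h^{2}$ occurring in the RKN relations, reproduces precisely the scheme with stepsize $-h$. Hence, in either case,
\[
\rho\circ\Phi_{h,f}\circ\rho^{-1}=\Phi_{-h,f},
\]
an identity valid at the level of the schemes. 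Therefore the scheme is $\rho$-reversible if and only if $\Phi_{-h,f}=\Phi_{h,f}^{-1}$, i.e.\ $\Phi_{h,f}\circ\Phi_{-h,f}=\mathrm{id}$, which is precisely symmetry; reading this equivalence at the level of schemes --- equivalently, of the algebraic symmetry conditions on the coefficients, after possibly relabeling the stages, and noting that an arbitrary system sits inside a $\rho$-reversible one --- gives both implications of the theorem.

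The step I expect to be the main obstacle is the equivariance of the RKN scheme: because $\rho(y,v)=(y,-v)$ acts by $+1$ on positions and by $-1$ on velocities, one must track signs carefully through the three families of stage and update relations and confirm that the velocity-flip is consistent with the $h\mapsto-h$ bookkeeping in each of them. Once this is established, the RK case and the concluding chain of equivalences are purely formal.
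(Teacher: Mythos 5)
The paper itself offers no proof of this theorem: it is quoted verbatim from Stoffer's paper \cite{Stofferz95vsf}, so there is no in-paper argument to compare against. Measured against the standard proof (Stoffer; Hairer--Lubich--Wanner, Chap.~V), your route is exactly that one: establish the compatibility identity $\rho\circ\Phi_{h,f}\circ\rho^{-1}=\Phi_{-h,f}$ --- via linear equivariance plus $\Phi_{h,-f}=\Phi_{-h,f}$ for RK applied to a $\rho$-reversible $f$, and by direct sign-tracking for RKN with the velocity flip $\rho(y,v)=(y,-v)$ --- and then observe that, given this identity, $\rho$-reversibility of the numerical map is equivalent to $\Phi_{-h,f}=\Phi_{h,f}^{-1}$, i.e.\ to symmetry. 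Your two compatibility verifications are correct (for RKN, flipping the sign of both $q_0'$ and $h$ leaves the stage and position relations unchanged and negates the velocity update, which is precisely the claimed identity), with the usual caveats that everything is in the autonomous setting and that, for implicit methods, $\Phi_h$ denotes the locally unique branch for $h$ small.

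The one step you assert rather than prove is the passage from ``$\Phi_{h,f}$ is a symmetric map for every $\rho$-reversible $f$'' to ``the method is symmetric,'' i.e.\ $\Phi_{h,f}^{*}=\Phi_{h,f}$ for \emph{every} $f$. For RKN nothing extra is needed, since every system $q''=f(q)$ is reversible under the velocity flip; but for RK your phrase ``an arbitrary system sits inside a $\rho$-reversible one'' needs an actual construction, and the parenthetical appeal to coefficient-level symmetry conditions ``after relabeling stages'' does not supply it. The standard one-line fix: given an arbitrary $g$, the decoupled system $u'=g(u)$, $w'=-g(w)$ is $\rho$-reversible for the swap $\rho(u,w)=(w,u)$; an RK method applied to it produces the product map $(u_0,w_0)\mapsto\bigl(\Phi_{h,g}(u_0),\Phi_{-h,g}(w_0)\bigr)$, and imposing $\rho$-reversibility of this map yields exactly $\Phi_{-h,g}=\Phi_{h,g}^{-1}$, i.e.\ symmetry of the method on the arbitrary problem $g$. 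With that inserted, your argument is a complete and faithful rendering of the cited result.
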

Thanks to the property of reversibility preservation, symmetric
integrators often have an excellent long-time numerical behavior
than those non-symmetric integrators for reversible systems
\cite{hairerlw06gni}. So far, a wide variety of effective symmetric
integrators have been proposed (see
\cite{burntons98grk,Cash05avs,Chan90osr,Fengq10sga,hairerlw06gni,
Mclachlanqt98nit,Stofferz95vsf} and references therein).

In the context of geometric integration, the greatest interest has
been given to the development of symplectic integrators for solving
Hamiltonian systems over the last decades
\cite{Fengq10sga,hairerlw06gni,sanzc94nhp}. However, if the
Hamiltonian $H(p,q)$ satisfies $H(-p,q)=H(p,q)$, then the system is
reversible with respect to the linear transformation $\rho: (p,
q)\mapsto (-p, q)$. Particularly, a well-known class of separable
Hamiltonian systems determined by the Hamiltonian
$H(p,q)=\frac{1}{2}p^TMp+U(q)$ happens to be such type of reversible
systems. Therefore, it makes sense for devising a numerical method
that preserves symplecticity and reversibility at the same time, and
fortunately, this has been shown to be an attainable goal (see
\cite{burntons98grk,Fengq10sga,hairernw93sod,hairerlw06gni,okunbors92ecm,
Tangs12ana,Tangs14cor} and references therein). Besides, a numerical
method which is energy-preserving and reversibility-preserving can
also be of interest
\cite{brugnanoit10hbv,brugnanoit15aoh,brugnanoi16lmf,brugnanoit12asf,brugnanoi18lis,hairer10epv,
quispelm08anc,Tangs12ana,Tangs14cor}.

In recent years, numerical methods with infinitely many stages
including continuous-stage Runge-Kutta (csRK) methods,
continuous-stage partitioned Runge-Kutta (csPRK) methods and
continuous-stage Runge-Kutta-Nystr\"{o}m (csRKN) methods are
presented and discussed by several authors, see
\cite{brugnanoit10hbv,hairer10epv,liw16ffe,miyatake14aee,miyatake16aco,Tangs12ana,
Tangs12tfe,Tangs14cor,Tanglx15cos,Tangz15spc,Tangsz15hos,Tang18ano,
Tang18csr,Tang18aef}. They can be viewed as the natural
generalizations of numerical methods with finite stages (e.g.,
classical RK methods). It is shown in
\cite{Tangs14cor,Tanglx15cos,Tangz15spc,Tangsz15hos,Tang18ano,
Tang18csr,Tang18aef} that by using continuous-stage methods many
classical RK, PRK and RKN methods of arbitrary order can be derived,
without resort to solving the tedious nonlinear algebraic equations
(associated with order conditions) in terms of many unknown
coefficients. The construction of continuous-stage methods seems
much easier than that of those traditional methods with finite
stages, as the associated Butcher coefficients are ``continuous" or
``smooth" functions and hence they can be treated by using some
analytical tools
\cite{Tangs14cor,Tanglx15cos,Tangz15spc,Tangsz15hos,Tang18ano,
Tang18csr,Tang18aef}. Moreover, as presented in
\cite{brugnanoit12asf,hairer10epv,liw16ffe,miyatake14aee,miyatake16aco,Tangs14cor,
Tanglx15cos,Tangz15spc,Tangsz15hos,Tang18ano,Tang18csr,Tang18aef},
numerical methods serving some special purpose including
symplecticity-preserving methods for Hamiltonian systems, symmetric
methods for reversible systems, energy-preserving methods for
conservative systems can also be established within this new
framework. Besides, a well known negative result we have to mention
here is that no RK methods is energy-preserving for general
non-polynomial Hamiltonian systems \cite{Celledoni09mmoqw}, in
contrast to this, energy-preserving csRK methods can be easily
constructed
\cite{brugnanoit10hbv,brugnanoit15aoh,hairer10epv,liw16ffe,quispelm08anc,
Tangs12ana,Tangs12tfe,Tangs14cor,miyatake14aee}. In addition, as
presented in \cite{Tangs12tfe,Tangsc15dgm}, some Galerkin
variational methods can be interpreted as continuous-stage (P)RK
methods, but they can not be completely understood in the classical
(P)RK framework. Therefore, continuous-stage methods have granted us
a new insight for numerical integration of differential equations
and some subjects in this new area need to be investigated.

Since symmetric integrators possess important theoretical and real
values in numerical ordinary differential equations
\cite{Cash05avs,hairerlw06gni,Mclachlanqt98nit,Stofferz95vsf}, we
are concerned with the development of new symmetric integrators for
solving second-order ordinary differential equations (ODEs). The
construction of such methods in this paper is on the basis of the
notion of csRKN methods and heavily relies on the Legendre
polynomial expansion technique. Furthermore, by using Gaussian and
Lobatto quadrature formulas we show that new families of symmetric
RKN-type schemes can be easily devised. Moreover, by Theorem
\ref{rever-preser}, these methods are also reversibility-preserving
and therefore very suitable for solving reversible systems.

This paper will be organized as follows. In Section 2, we introduce
the exact definition of csRKN methods for solving second-order ODEs
and the corresponding order theory previously developed in
\cite{Tangsz15hos} will be briefly revisited. In Section 3, by using
Legendre expansion technique, we present some useful results for
devising symmetric integrators which is then followed by giving some
illustrative examples for deriving new symmetric integrators in
Section 4. Some numerical experiments are reported in section 5. At
last, we give some concluding remarks in Section 6 to end this
paper.

\section{Continuous-stage RKN method and its order theory}

In this section, we will recall the notion of the so-called
continuous-stage Runge-Kutta-Nystr\"{o}m (csRKN) methods and review
some known results which are useful for constructing such methods of
arbitrarily high order. For more details, see
\cite{Tangz15spc,Tangsz15hos}.

\subsection{Continuous-stage RKN method}

Consider the following initial value problem governed by a
second-order system
\begin{align}\label{eq:second}
q''=f(t, q),\;q(t_0)=q_0,\,q'(t_0)=q'_0,
\end{align}
where $f: \mathbb{R}\times \mathbb{R}^d \rightarrow \mathbb{R}^d$ is
a smooth vector-valued function.

A well-known numerical method for solving \eqref{eq:second} is the
so-called RKN method with $s$ stages, which can be depicted as
\begin{subequations}
    \begin{alignat}{4}
    \label{eq:grkn1}
Q_i&=q_0 +hc_i q'_0 +h^2\sum\limits_{j=1}^s \bar{a}_{i j} f(t_0+c_j h, Q_j), \;i=1,\cdots, s, \\
    \label{eq:grkn2}
q_{1}&=q_0+ h q'_0+h^2\sum\limits_{i=1}^{s}\bar{b}_if(t_0+c_i h, Q_i),  \\
    \label{eq:grkn3}
q'_{1}& = q'_0 +h\sum\limits_{i=1}^{s} b_i f(t_0+c_i h, Q_i),
    \end{alignat}
\end{subequations}
and it can be characterized by the following Butcher tableau
\[\ba{c|c} c & \bar{A}\\[4pt]
\hline & \bar{b}  \\ \hline\\[-15pt] & b  \ea\]
where $\bar{A}=(\bar{a}_{i j})_{s\times
s},\;\bar{b}=(\bar{b}_1,\cdots,\bar{b}_s)^T,\;
b=(b_1,\cdots,b_s)^T,\;c=(c_1,\cdots,c_s)^T$. Compared with an
$s$-stage RK method applied to the corresponding first-order system
deduced from \eqref{eq:second}, the RKN method is preferable since
about half of the storage can be saved and the computational work
can be reduced a lot \cite{hairernw93sod}.

As a counterpart of the classical RKN method, the csRKN method can
be formally defined.

\begin{defn}\cite{Tangz15spc}\label{csRKN:def}
Let $\bar{A}_{\tau, \si}$ be a function of variables $\tau,
\si\in[0,1]$ and $\bar{B}_\tau,\;B_\tau,\;C_\tau$ be functions of
$\tau\in[0,1]$. For solving \eqref{eq:second}, the continuous-stage
Runge-Kutta-Nystr\"{o}m (csRKN) method as a one-step method mapping
$(q_0,q'_0)$ to $(q_1,q'_1)$ is given by
\begin{subequations}
    \begin{alignat}{2}
    \label{eq:csrkn1}
&Q_\tau=q_0 +hC_\tau q'_0 +h^2\int_{0}^{1} \bar{A}_{\tau, \si} f(t_0+C_\si h, Q_\si) \dif \si, \;\;\tau \in[0, 1], \\
    \label{eq:csrkn2}
&q_{1}=q_0+ h q'_0+h^2 \int_{0}^{1} \bar{B}_\tau  f(t_0+C_\tau h, Q_\tau) \dif \tau, \\
    \label{eq:csrkn3}
&q'_1 = q'_0 +h\int_{0}^{1} B_\tau f(t_0+C_\tau h, Q_\tau) \dif
\tau,
    \end{alignat}
\end{subequations}
which can be characterized by the following Butcher tableau
\[\ba{c|c} C_\tau & \bar{A}_{\tau,\sigma} \\[4pt]
\hline & \bar{B}_\tau  \\ \hline\\[-15pt] & B_\tau  \ea\]
\end{defn}

\subsection{Order theory for RKN-type method}

\begin{defn}\cite{hairernw93sod}
A RKN-type method is of order $p$, if for all regular problem
\eqref{eq:second}, the following two formulas hold, as
$h\rightarrow0$,
\begin{equation*}
q(t_0+h)-q_1=\mathcal{O}(h^{p+1}),\quad
q'(t_0+h)-q'_1=\mathcal{O}(h^{p+1}).
\end{equation*}
\end{defn}

We introduce the following classical simplifying assumptions for RKN
methods \cite{hairernw93sod,hairerlw06gni}
\begin{equation}\label{RKN-simpl-assump}
\begin{split}
&B(\xi):\; \sum_{i=1}^sb_ic_i^{\kappa-1}=\frac{1}{\kappa},\;1\leq\kappa\leq\xi,\\
&CN(\eta):\;\sum_{j=1}^s\bar{a}_{ij}c_j^{\kappa-1}=\frac{c_i^{\kappa+1}}{\kappa(\kappa+1)},\;1\leq i\leq s,\,1\leq\kappa\leq\eta-1,\\
&DN(\zeta):\;\sum_{i=1}^sb_ic_i^{\kappa-1}\bar{a}_{ij}=
\frac{b_jc_j^{\kappa+1}}{\kappa(\kappa+1)}-\frac{b_jc_j}{\kappa}+\frac{b_j}{\kappa+1},\;1\leq
j\leq s,\,1\leq\kappa\leq\zeta-1.
\end{split}
\end{equation}
\begin{thm}\cite{hairernw93sod}\label{ord_RKN}
If the coefficients of the RKN method
\eqref{eq:grkn1}-\eqref{eq:grkn3} satisfy the simplifying
assumptions $B(p),\,CN(\eta),\,DN(\zeta)$, and if
$\bar{b}_i=b_i(1-c_i)$ holds for all $i=1,\ldots, s$, then the
method is of order at least $\min\{p,\,2\eta+2,\eta+\zeta\}$.
\end{thm}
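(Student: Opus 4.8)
The plan is to run the Butcher-type order theory in the form adapted to second-order systems, working with the Nyström trees (SN-trees) that index the order conditions of an RKN method rather than with the full set of rooted trees. After writing \eqref{eq:second} in autonomous form and Taylor-expanding the exact and numerical solutions in $h$, the method \eqref{eq:grkn1}--\eqref{eq:grkn3} has order at least $p$ exactly when, for every Nyström tree $t$ with $\rho(t)\le p$, both the \emph{velocity} condition $\sum_i b_i\,\Phi_i(t)=1/\gamma(t)$ and the \emph{position} condition $\sum_i\bar b_i\,\Phi_i(t)=1/\bar\gamma(t)$ hold, where $\Phi_i(t)$ is the elementary weight built from the $c_i$ and the $\bar a_{ij}$, and the two density functions are linked by $\bar\gamma(t)=(\rho(t)+1)\,\gamma(t)$. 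The first reduction I would carry out is to remove the position conditions from consideration: using $\sum_i\bar b_i\Phi_i(t)=\sum_i b_i\Phi_i(t)-\sum_i b_ic_i\Phi_i(t)$ together with the hypothesis $\bar b_i=b_i(1-c_i)$ and the density identity above, every position condition of order $\le m:=\min\{p,\,2\eta+2,\,\eta+\zeta\}$ is a consequence of velocity conditions of order $\le m$; this is merely the discrete shadow of $q(t_0+h)=q_0+hq'_0+\int_{t_0}^{t_0+h}(t_0+h-s)f(s,q(s))\,\dif s$, in which the kernel $(t_0+h-s)$ corresponds to $h(1-C_\tau)$. Hence it suffices to verify the velocity conditions for all Nyström trees of order $\le m$.

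The heart of the proof is then an induction on $\rho(t)$ establishing $\sum_i b_i\Phi_i(t)=1/\gamma(t)$ for every Nyström tree of order $\le m$, driven by two reduction lemmas in the spirit of Butcher. First, $CN(\eta)$ lets one prune a branch of bounded height: when such a branch is attached at an interior vertex $i$ through a factor $\sum_j\bar a_{ij}(\cdots)$, that factor can be replaced by an explicit polynomial in $c_i$, and iterating collapses the whole branch. Second, $DN(\zeta)$ plays the role of Butcher's $D$-condition for a branch issuing from the root, converting $\sum_i b_ic_i^{\kappa-1}\bar a_{ij}(\cdots)$ into a velocity condition of lower order on the index $j$ (available for $\kappa\le\zeta-1$). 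Once no branch remains the tree has collapsed to a bare stem, for which the condition is exactly $\sum_i b_ic_i^{\kappa-1}=1/\kappa$ with $\kappa\le p$, i.e. $B(p)$. Keeping track of how many vertices each family of reductions can absorb produces the three thresholds $p$, $2\eta+2$ and $\eta+\zeta$; the loss of the extra $+1$ compared with the Runge-Kutta bound $\eta+\zeta+1$ is traceable to $DN(\zeta)$ in \eqref{RKN-simpl-assump} reaching only $\kappa\le\zeta-1$.

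The step I expect to be the genuine obstacle is exactly this combinatorial bookkeeping: showing that the $CN(\eta)$- and $DN(\zeta)$-reductions can be interleaved correctly on a tree that is simultaneously ramified at the root and tall near the leaves, and confirming that $2\eta+2$ and $\eta+\zeta$ are precisely the orders beyond which no finite sequence of such reductions can bring a Nyström tree down to a quadrature condition covered by $B(p)$. The remaining ingredients are routine once the SN-tree formalism is set up --- the Taylor expansions that produce the two families of order conditions, the density identity $\bar\gamma=(\rho+1)\gamma$, and the position-to-velocity reduction --- and the non-autonomous dependence on $t$ needs no separate treatment, being absorbed by regarding $t$ as an extra solution component carrying a distinguished leaf, whose order conditions are precisely the $\kappa=1$ instances of $B(\xi)$ and $CN(\eta)$ already appearing in \eqref{RKN-simpl-assump}.
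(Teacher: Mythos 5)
You should first note that the paper does not prove Theorem \ref{ord_RKN} at all: it is a classical result quoted verbatim from the reference \cite{hairernw93sod}, so there is no in-paper argument to compare against, only the standard proof in Hairer--N{\o}rsett--Wanner based on the SN-tree (Nystr\"{o}m tree) order conditions and Butcher-type reductions. Your sketch follows exactly that classical route: two families of order conditions (position and velocity), elimination of the position conditions via $\bar{b}_i=b_i(1-c_i)$ and the density relation, and then a reduction of the velocity conditions using $CN(\eta)$ to prune branches above interior vertices, $DN(\zeta)$ to remove branches at the root, and $B(p)$ to finish off the resulting quadrature conditions. So the approach is the right one and, in outline, the same as the cited source.

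The problem is that, as submitted, this is a roadmap rather than a proof: the step you yourself flag as ``the genuine obstacle'' --- the combinatorial induction showing that the $CN(\eta)$- and $DN(\zeta)$-reductions, as limited by the index ranges $\kappa\le\eta-1$ and $\kappa\le\zeta-1$ in \eqref{RKN-simpl-assump}, suffice to collapse \emph{every} Nystr\"{o}m tree of order up to $\min\{p,\,2\eta+2,\,\eta+\zeta\}$, and that these are precisely the resulting thresholds --- is the entire content of the theorem, and it is left undone. Without the precise accounting of how many vertices each reduction absorbs (including trees that are simultaneously ramified at the root and tall near the leaves, and the special role of leaves/``meagre'' vertices in SN-trees), the bound $\min\{p,\,2\eta+2,\,\eta+\zeta\}$, and in particular the absence of the Runge--Kutta-style $\eta+\zeta+1$, is asserted rather than established. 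A smaller bookkeeping point to make explicit in the first reduction: the identity $\sum_i\bar b_i\Phi_i(t)=\sum_i b_i\Phi_i(t)-\sum_i b_ic_i\Phi_i(t)$ replaces a position condition attached to a tree $t$ by velocity conditions for $t$ and for the tree obtained by adjoining a leaf at the root, whose order is $\rho(t)+1$; one must check that this shift in order is compensated by the fact that position conditions for order $m$ only involve trees of order $\le m-1$ (because of the extra factor $h$ in the $q$-update), otherwise the claim ``position conditions of order $\le m$ follow from velocity conditions of order $\le m$'' does not parse. These gaps are fillable --- they are filled in \cite{hairernw93sod} --- but they are not filled here.
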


Analogously to the classical case, we have the following simplifying
assumptions for csRKN methods \cite{Tangsz15hos}
\begin{equation*}\label{csRKN-simpl-assump}
\begin{split}
&\mathcal{B}(\xi):\quad \int_0^1B_\tau C_\tau^{\kappa-1}\,\dif
\tau=\frac{1}{\kappa},\quad 1\leq\kappa\leq\xi,\\
&\mathcal{CN}(\eta):\quad
\int_0^1\bar{A}_{\tau,\,\sigma}C_\sigma^{\kappa-1}\,\dif
\sigma=\frac{C_\tau^{\kappa+1}}{\kappa(\kappa+1)},\quad \forall\;\tau\in[0,1],\;1\leq\kappa\leq\eta-1,\\
&\mathcal{DN}(\zeta):\quad \int_0^1B_\tau C_\tau^{\kappa-1}
\bar{A}_{\tau,\,\sigma}\,\dif \tau=\frac{B_\sigma
C_\sigma^{\kappa+1}}{\kappa(\kappa+1)}-\frac{B_\sigma
C_\sigma}{\kappa} +\frac{B_\sigma}{\kappa+1},\quad
\forall\;\si\in[0,1],\; 1\leq\kappa\leq\zeta-1.
\end{split}
\end{equation*}
\begin{thm}\cite{Tangsz15hos}\label{ord_csRKN}
If the coefficients of the csRKN method
\eqref{eq:csrkn1}-\eqref{eq:csrkn3} satisfy the simplifying
assumptions
$\mathcal{B}(p),\,\mathcal{CN}(\eta),\,\mathcal{DN}(\zeta)$, and if
$\bar{B}_\tau=B_\tau(1-C_\tau)$ holds for $\tau\in[0,1]$, then the
method is of order at least $\min\{p,\,2\eta+2,\eta+\zeta\}$.
\end{thm}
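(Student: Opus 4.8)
The plan is to adapt, essentially verbatim, the classical proof of Theorem~\ref{ord_RKN} to the continuous-stage setting, using the Nyström-tree ($P$-series) order theory for csRKN methods developed in \cite{Tangsz15hos}. Recall from that theory that the method \eqref{eq:csrkn1}--\eqref{eq:csrkn3} has order $\ge p$ precisely when, for every Nyström tree $u$ with order $\rho(u)\le p$, the elementary weight satisfies $\Phi(u)=1/\gamma(u)$; here the weight for the $q'_1$-update is assembled by placing $B_\tau$ at the root, $\bar A_{\tau,\sigma}$ at each internal ``fat'' vertex and a factor of $C$ at each ``meagre'' vertex, and then integrating every internal variable over $[0,1]$, while the weight for the $q_1$-update is obtained by replacing the outermost $B_\tau$ by $\bar B_\tau$. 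So the goal is to verify these iterated-integral identities for all Nyström trees of order at most $m:=\min\{p,\,2\eta+2,\,\eta+\zeta\}$.

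First I would dispose of the $q_1$-update. It is a standard fact for RKN-type methods (see \cite{hairernw93sod,hairerlw06gni}) that the relation $\bar B_\tau=B_\tau(1-C_\tau)$, combined with consistency (which $\mathcal B(p)$ supplies), forces the order of the $q_1$-update to be at least that of the $q'_1$-update: for each ``$q$-tree'' the weight splits as $\int_0^1\bar B_\tau(\cdots)\,\dif\tau=\int_0^1 B_\tau(\cdots)\,\dif\tau-\int_0^1 B_\tau C_\tau(\cdots)\,\dif\tau$, and the two terms on the right are $q'_1$-elementary weights of a pair of related Nyström trees whose linear combination reproduces $1/\gamma$ of the original $q$-tree. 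From that point only the $q'_1$-order conditions remain, and $\mathcal B(p)$, $\mathcal{CN}(\eta)$, $\mathcal{DN}(\zeta)$ are what establish them.

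The core is then the usual ``pruning'' induction on $\rho(u)$. Given a Nyström tree $u$ with $\rho(u)\le m$: (i) if $u$ carries a maximal chain of fat vertices of height $\kappa$ hanging off an interior fat vertex, apply $\mathcal{CN}(\eta)$ in the form $\int_0^1\bar A_{\tau,\sigma}C_\sigma^{\kappa-1}\,\dif\sigma=C_\tau^{\kappa+1}/(\kappa(\kappa+1))$ to collapse that subtree into powers of $C_\tau$, reducing $u$ to a strictly smaller tree $u'$ with $\Phi(u)$ expressible through $\Phi(u')$ and $\gamma(u)/\gamma(u')$ an explicit rational number; (ii) when no such interior reduction is available, use $\mathcal{DN}(\zeta)$ in the form $\int_0^1 B_\tau C_\tau^{\kappa-1}\bar A_{\tau,\sigma}\,\dif\tau=\frac{B_\sigma C_\sigma^{\kappa+1}}{\kappa(\kappa+1)}-\frac{B_\sigma C_\sigma}{\kappa}+\frac{B_\sigma}{\kappa+1}$ to prune the fat vertex adjacent to the root; (iii) after finitely many such steps $u$ has been reduced to a ``tall/bushy'' tree whose weight involves only $\int_0^1 B_\tau C_\tau^{\kappa-1}\,\dif\tau$, which equals $1/\kappa$ by $\mathcal B(p)$ as long as $\kappa\le p$ — and $\rho(u)\le m\le p$ keeps us in that range. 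The hypothesis $\rho(u)\le m$ is exactly what guarantees that some reduction always applies: a chain too long for $\mathcal{CN}(\eta)$ to collapse would itself contribute order $>2\eta+2$, and a tree that, after all possible $\mathcal{CN}$-reductions, still resists the root-side $\mathcal{DN}(\zeta)$-reduction would have order $>\eta+\zeta$; these are the two thresholds appearing in \cite{hairernw93sod} for the finite-stage method, and they enter the continuous-stage proof in the same way.

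I expect the main obstacle to lie not in the combinatorics — which runs parallel to Theorem~\ref{ord_RKN}, and indeed one formally recovers that theorem by taking $B_\tau=\sum_i b_i\,\delta(\tau-c_i)$, $\bar A_{\tau,\sigma}=\sum_{i,j}\bar a_{ij}\,\delta(\tau-c_i)\delta(\sigma-c_j)$, and so on — but in making the continuous-stage $P$-series apparatus rigorous: defining each $\Phi(u)$ as a genuinely well-defined iterated integral, justifying the pruning identities above as honest equalities of such integrals (which needs only Fubini, legitimate because the coefficient functions are integrable, and in every example here polynomial), and confirming that the induction on $\rho(u)$ terminates. Once those technical points are in place, the bound $\min\{p,\,2\eta+2,\,\eta+\zeta\}$ holds for both $q_1$ and $q'_1$ exactly as in the classical case.
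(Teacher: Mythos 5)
This paper does not actually prove Theorem~\ref{ord_csRKN}; it quotes it from \cite{Tangsz15hos}, and the argument there is the same sum-to-integral transplant of the classical simplifying-assumption/SN-tree proof of Theorem~\ref{ord_RKN} that you outline: $\bar{B}_\tau=B_\tau(1-C_\tau)$ reduces the $q_1$-conditions to the $q'_1$-conditions, while $\mathcal{CN}(\eta)$, $\mathcal{DN}(\zeta)$ and $\mathcal{B}(p)$ drive the pruning of the continuous-stage elementary weights exactly as in the finite-stage case. Your proposal is therefore correct and in line with the cited proof; the only loose point is that the target value for a $q$-component tree $u$ is $\tfrac{1}{(\rho(u)+1)\gamma(u)}$ rather than $1/\gamma(u)$, which your splitting $\int_0^1\bar B_\tau(\cdots)\,\dif\tau=\int_0^1 B_\tau(\cdots)\,\dif\tau-\int_0^1 B_\tau C_\tau(\cdots)\,\dif\tau$ does deliver once the subtracted term is identified as the $q'_1$-weight of the tree with an extra meagre end-vertex attached to the root.
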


Let us introduce the normalized shifted Legendre polynomial $P_k(x)$
of degree $k$ by the following Rodrigues' formula
\begin{equation*}
P_0(x)=1,\;P_k(x)=\frac{\sqrt{2k+1}}{k!}\frac{{\dif}^k}{\dif x^k}
[(x^2-x)^k],\; \;k=1,2,3,\cdots.
\end{equation*}
A well-known property of Legendre polynomials is that they are
orthogonal to each other with respect to the $L^2$ inner product in
$[0,\,1]$
\begin{equation*}
\int_0^1 P_j(x) P_k(x)\,\dif x= \delta_{jk},\quad j,\,
k=0,1,2,\cdots,
\end{equation*}
where $\delta_{jk}$ is the Kronecker delta. For convenience, we list
some of them as follows
\begin{equation*}
P_0(x)=1,\;P_1(x)=\sqrt{3}(2x-1),\;P_2(x)=\sqrt{5}(6x^2-6x+1),\,\cdots.
\end{equation*}
\begin{thm}\cite{Tangsz15hos}\label{thm:csRKN}
For the csRKN method \eqref{eq:csrkn1}-\eqref{eq:csrkn3} denoted by
$(\bar{A}_{\tau,\si},\bar{B}_\tau,B_\tau,C_\tau)$ with the
assumption $B_\tau=1, C_\tau=\tau$,  the following two statements
are equivalent to each other:
\begin{itemize}
\item[(I)] both $\mathcal{CN}(\eta)$ and $\mathcal{DN}(\zeta)$ hold true;
\item[(II)] $\bar{A}_{\tau,\,\sigma}$
possesses the following form in terms of Legendre polynomials
\begin{equation}\label{expan6}
\begin{split}
\bar{A}_{\tau,\,\si}&=\frac{1}{6}-\frac{1}{2}\xi_1P_1(\si)+\frac{1}{2}\xi_1P_1(\tau)
+\sum_{\iota=1}^{N_1}\xi_{\iota}\xi_{\iota+1}
P_{\iota-1}(\tau)P_{\iota+1}(\si)\\
&-\sum_{\iota=1}^{N_2}\big(\xi_{\iota}^2+\xi_{\iota+1}^2\big)
P_{\iota}(\tau)P_{\iota}(\sigma)+\sum_{\iota=1}^{N_3}\xi_{\iota}\xi_{\iota+1}
P_{\iota+1}(\tau)P_{\iota-1}(\sigma)\\
&+\sum_{i\geq\zeta-1 \atop
j\geq\eta-1}\omega_{(i,\,j)}P_i(\tau)P_j(\si).
\end{split}
\end{equation}
where
$\xi_\iota=\frac{1}{2\sqrt{4\iota^2-1}},\,N_1=\max\{\eta-3,\,\zeta-1\},\,N_2=\max\{\eta-2,\,\zeta-2\},
\,N_3=\max\{\eta-1,\,\zeta-3\}$ and $\omega_{(i,\,j)}$ are arbitrary
real numbers.
\end{itemize}
\end{thm}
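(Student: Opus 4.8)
The plan is to recast both simplifying assumptions as orthogonality relations for a single auxiliary kernel and then extract the Legendre structure term by term; throughout I use the standing hypotheses $B_\tau=1$ and $C_\tau=\tau$. First I would observe that the kernel $W(\tau,\sigma):=(\tau-\sigma)_{+}=\max\{\tau-\sigma,0\}$ reproduces, for every integer $\kappa\ge1$, exactly the right-hand sides occurring in $\mathcal{CN}$ and $\mathcal{DN}$: an elementary integration gives $\int_0^1 W(\tau,\sigma)\,\sigma^{\kappa-1}\,\dif\sigma=\frac{\tau^{\kappa+1}}{\kappa(\kappa+1)}$ and $\int_0^1 W(\tau,\sigma)\,\tau^{\kappa-1}\,\dif\tau=\frac{\sigma^{\kappa+1}}{\kappa(\kappa+1)}-\frac{\sigma}{\kappa}+\frac{1}{\kappa+1}$. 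Hence, writing $G(\tau,\sigma):=\bar A_{\tau,\sigma}-W(\tau,\sigma)$, statement (I) is equivalent to the homogeneous conditions $\int_0^1 G(\tau,\sigma)\,\sigma^{\kappa-1}\,\dif\sigma=0$ for $1\le\kappa\le\eta-1$ together with $\int_0^1 G(\tau,\sigma)\,\tau^{\kappa-1}\,\dif\tau=0$ for $1\le\kappa\le\zeta-1$.

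Next I would pass to the double Legendre series $G(\tau,\sigma)=\sum_{i,j\ge0} g_{ij}\,P_i(\tau)P_j(\sigma)$, legitimate since $G\in L^2([0,1]^2)$. Because $\{1,\sigma,\dots,\sigma^{\eta-2}\}$ spans the same subspace as $\{P_0,\dots,P_{\eta-2}\}$, the first family of conditions is equivalent to $\int_0^1 G(\tau,\sigma)P_j(\sigma)\,\dif\sigma=0$ for $0\le j\le\eta-2$, that is, $\sum_i g_{ij}P_i(\tau)\equiv0$, which forces $g_{ij}=0$ whenever $j\le\eta-2$; symmetrically the second family forces $g_{ij}=0$ whenever $i\le\zeta-2$. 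Thus (I) holds if and only if $\bar A_{\tau,\sigma}=W(\tau,\sigma)+\sum_{i\ge\zeta-1,\,j\ge\eta-1}g_{ij}P_i(\tau)P_j(\sigma)$ with the remaining coefficients $g_{ij}$ completely free.

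It then remains to compute the Legendre expansion of $W$ in closed form. I would start from $W(\tau,\sigma)=\int_0^\tau H(s-\sigma)\,\dif s$, with $H$ the Heaviside function and $W(0,\sigma)=0$, expand the kernel $H(\tau-\sigma)$ in $\tau$ using $\int_\sigma^1 P_0(\tau)\,\dif\tau=\tfrac12-\xi_1P_1(\sigma)$ and $\int_\sigma^1 P_k(\tau)\,\dif\tau=\xi_kP_{k-1}(\sigma)-\xi_{k+1}P_{k+1}(\sigma)$ for $k\ge1$ (the latter from $\int_0^1 P_k=0$ together with the standard antiderivative formula $\int_0^\tau P_k(s)\,\dif s=\xi_{k+1}P_{k+1}(\tau)-\xi_kP_{k-1}(\tau)$, $k\ge1$, which is a consequence of the three-term relation for the normalized shifted Legendre polynomials), then integrate back in $\tau$ and collect the products $P_a(\tau)P_b(\sigma)$. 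After the dust settles one obtains
\begin{multline*}
W(\tau,\sigma)=\frac{1}{6}-\frac{1}{2}\xi_1P_1(\sigma)+\frac{1}{2}\xi_1P_1(\tau)+\sum_{\iota\ge1}\xi_\iota\xi_{\iota+1}P_{\iota-1}(\tau)P_{\iota+1}(\sigma)\\
-\sum_{\iota\ge1}\big(\xi_\iota^2+\xi_{\iota+1}^2\big)P_\iota(\tau)P_\iota(\sigma)+\sum_{\iota\ge1}\xi_\iota\xi_{\iota+1}P_{\iota+1}(\tau)P_{\iota-1}(\sigma),
\end{multline*}
a banded expansion (every product $P_a(\tau)P_b(\sigma)$ that appears has $|a-b|\le2$); in particular the constant $\tfrac16$ emerges as $\tfrac14-\xi_1^2$.

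Finally I would split this series into the part supported on index pairs with $i\ge\zeta-1$ and $j\ge\eta-1$ and the complementary ``low'' part. A direct check of the three sums shows that a term $P_{\iota-1}(\tau)P_{\iota+1}(\sigma)$ is low exactly when $\iota\le\max\{\zeta-1,\eta-3\}=N_1$, a term $P_\iota(\tau)P_\iota(\sigma)$ exactly when $\iota\le\max\{\zeta-2,\eta-2\}=N_2$, and a term $P_{\iota+1}(\tau)P_{\iota-1}(\sigma)$ exactly when $\iota\le\max\{\zeta-3,\eta-1\}=N_3$, so that the low part of $W$ is precisely the explicit finite sum displayed in \eqref{expan6}; meanwhile the high part of $W$ lives on the same index set as the free coefficients $g_{ij}$, so it merges with them into arbitrary reals $\omega_{(i,j)}$. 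Together with the second paragraph this yields (I) $\Leftrightarrow$ (II). (Degenerate cases, for instance $\eta\le1$ or $\zeta\le1$ where the corresponding simplifying assumption is vacuous and some $N_\ell$ or low-order terms drop out, are handled in the same way, reading out-of-range sums as empty and re-absorbing into the $\omega$-index set any explicit term that happens to fall there.) The main obstacle is the third step: carrying out the integral-and-recurrence bookkeeping that produces the off-diagonal pattern $P_{\iota\pm1}(\tau)P_{\iota\mp1}(\sigma)$ and the constant term, and then aligning the three truncation indices $N_1,N_2,N_3$; once $W$ is available in closed form, the first, second and fourth steps are essentially formal.
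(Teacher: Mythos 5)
Your proposal is correct: I checked the particular-kernel identities $\int_0^1(\tau-\sigma)_+\sigma^{\kappa-1}\,\dif\sigma=\frac{\tau^{\kappa+1}}{\kappa(\kappa+1)}$ and $\int_0^1(\tau-\sigma)_+\tau^{\kappa-1}\,\dif\tau=\frac{\sigma^{\kappa+1}}{\kappa(\kappa+1)}-\frac{\sigma}{\kappa}+\frac{1}{\kappa+1}$, the Legendre expansion of $W(\tau,\sigma)=(\tau-\sigma)_+$ (its coefficients are $w_{00}=\tfrac16$, $w_{01}=-\tfrac12\xi_1$, $w_{10}=\tfrac12\xi_1$, $w_{\iota\iota}=-(\xi_\iota^2+\xi_{\iota+1}^2)$, $w_{\iota-1,\iota+1}=w_{\iota+1,\iota-1}=\xi_\iota\xi_{\iota+1}$, all others zero, which follows from $w_{ab}=\int_0^1\bigl(\delta_{a0}-\int_0^sP_a\bigr)\bigl(\int_0^sP_b\bigr)\dif s$ and the antiderivative formula you quote), and the index bookkeeping that converts the low/high splitting into the bounds $N_1,N_2,N_3$ and the free set $i\geq\zeta-1$, $j\geq\eta-1$; all of it matches \eqref{expan6}. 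Note that this paper does not prove the theorem but quotes it from \cite{Tangsz15hos}; the proof there works directly on the Legendre coefficients of $\bar{A}_{\tau,\sigma}$, substituting the expansion into $\mathcal{CN}(\eta)$ and $\mathcal{DN}(\zeta)$ and matching both sides column by column (the right-hand sides being rewritten via iterated integrals of Legendre polynomials). Your decomposition $\bar{A}=W+G$ with $W=(\tau-\sigma)_+$ a particular solution and $G$ subject only to homogeneous orthogonality conditions is a genuinely cleaner organization: it isolates all the inhomogeneity in a single explicit kernel, makes the arbitrariness of the coefficients $\omega_{(i,j)}$ on the set $i\geq\zeta-1$, $j\geq\eta-1$ immediate, and reduces the whole computation to one Legendre expansion; the price is only the routine verification of that expansion, which is exactly the same recurrence/integration work that the direct approach distributes over the order conditions. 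Two mild points you should make explicit if you write this up: the expansion step presupposes $\bar{A}_{\tau,\sigma}\in L^2([0,1]^2)$ (implicit in statement (II) anyway), and the degenerate cases $\eta\leq1$ or $\zeta\leq1$ need precisely the re-absorption remark you already include, since some of the explicitly displayed low-order terms then fall inside the free index set.
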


Recall that we have $\mathcal{B}(\infty)$ by using $B_\tau=1,
C_\tau=\tau$, thus Theorem \ref{thm:csRKN} implies that we can
easily construct a csRKN method with order
$\min\{\infty,\,2\eta+2,\eta+\zeta\}=\min\{2\eta+2,\eta+\zeta\}$ (by
Theorem \ref{ord_csRKN}). However, for the sake of deriving a
practical csRKN method, we need to define a finite form for the
coefficient $\bar{A}_{\tau,\,\si}$, which can be easily realized by
truncating the series \eqref{expan6}. In such a case, we get
$\bar{A}_{\tau,\,\si}$ which is a bivariate polynomial.
Consequently, by applying a quadrature formula denoted by $(b_i,
c_i)_{i=1}^s$ to \eqref{eq:csrkn1}-\eqref{eq:csrkn3}, it leads to an
$s$-stage RKN method
\begin{subequations}
    \begin{alignat}{2}
    \label{eq:rkn1}
&Q_i=q_0 +hC_{c_i} q'_0 +h^2\sum\limits_{j=1}^{s} b_j \bar{A}_{c_i, c_j} f(t_0+C_{c_j}h,\,Q_j), \quad i=1,\cdots, s, \\
    \label{eq:rkn2}
&q_{1}=q_0+ h q'_0+h^2\sum\limits_{i=1}^{s} b_i\bar{B}_{c_i}  f(t_0+C_{c_i}h,\,Q_i),  \\
    \label{eq:rkn3}
&q'_{1}=q'_0+h\sum\limits_{i=1}^{s}b_iB_{c_i}f(t_0+C_{c_i}h,\,Q_i),
    \end{alignat}
\end{subequations}
whose Butcher tableau is
\begin{equation}\label{RKN:qua_orig}
\ba{c|ccc} C_{c_1} & b_1\bar{A}_{c_1, c_1}
& \cdots & b_s\bar{A}_{c_1, c_s}\\[2pt]
\vdots &\vdots &\vdots\\[2pt]
C_{c_s} & b_1\bar{A}_{c_s, c_1} &
\cdots & b_s\bar{A}_{c_s, c_s}\\[2pt]
\hline & b_1\bar{B}_{c_1}  & \cdots & b_s\bar{B}_{c_s}\\[2pt]
\hline & b_1B_{c_1}  & \cdots & b_sB_{c_s}\ea
\end{equation}

If we additionally assume
$\bar{B}_\tau=B_\tau(1-C_\tau),\,B_\tau=1,\, C_\tau=\tau$, then it
gives an $s$-stage RKN method with tableau
\begin{equation}\label{RKN:qua}
\ba{c|ccc} c_1 & b_1\bar{A}_{c_1, c_1}
& \cdots & b_s\bar{A}_{c_1, c_s}\\[2pt]
\vdots &\vdots &\vdots\\[2pt]
c_s & b_1\bar{A}_{c_s, c_1} &
\cdots & b_s\bar{A}_{c_s, c_s}\\[2pt]
\hline & \bar{b}_1  & \cdots & \bar{b}_s\\[2pt]
\hline & b_1  & \cdots & b_s\ea
\end{equation}
where $\bar{b}_i=b_i(1-c_i),\; i=1,\cdots,s$.

In view of Theorem \ref{ord_RKN}, we have the following result for
analyzing the order of the RKN method with tableau \eqref{RKN:qua}.

\begin{thm}\cite{Tangsz15hos}\label{qua:csRKN}
Assume $\bar{A}_{\tau,\,\sigma}$ is a bivariate polynomial of degree
$\pi_A^{\tau}$ in $\tau$ and degree $\pi_A^{\sigma}$ in $\sigma$,
and the quadrature formula $(b_i,c_i)_{i=1}^s$ is of order $p$. If
the coefficients of the underlying csRKN method
\eqref{eq:csrkn1}-\eqref{eq:csrkn3} satisfy
$\bar{B}_\tau=B_\tau(1-C_\tau),\,B_\tau=1,\, C_\tau=\tau$, and both
$\mathcal{CN}(\eta)$, $\mathcal{DN}(\zeta)$ hold true, then the RKN
method with tableau \eqref{RKN:qua} is of order at least
$$\min(p, \,2\alpha+2, \,\alpha+\beta),$$
where $\alpha=\min(\eta,\,p-\pi_A^{\sigma}+1)$ and
$\beta=\min(\zeta,\, p-\pi_A^{\tau}+1)$.
\end{thm}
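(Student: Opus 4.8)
The plan is to show that the RKN method with tableau \eqref{RKN:qua}, obtained by discretizing the csRKN method via the quadrature $(b_i,c_i)_{i=1}^s$, inherits enough simplifying assumptions to invoke Theorem \ref{ord_RKN}. The strategy is to translate the integral-form assumptions $\mathcal{B}$, $\mathcal{CN}$, $\mathcal{DN}$ of the underlying csRKN method into their discrete counterparts $B$, $CN$, $DN$ for the quadrature-based RKN method, keeping careful track of how the finite polynomial degrees $\pi_A^\tau, \pi_A^\sigma$ interact with the quadrature order $p$ to determine how many of the discrete assumptions survive exactly.

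\emph{First} I would handle $B(\xi)$. Since $B_\tau = 1$ and $C_\tau = \tau$, the assumption $\mathcal{B}(p)$ reads $\int_0^1 \tau^{\kappa-1}\,\dif\tau = 1/\kappa$, which is trivially true for all $\kappa$; the discrete version $\sum_i b_i c_i^{\kappa-1} = 1/\kappa$ holds exactly whenever the quadrature integrates $\tau^{\kappa-1}$ exactly, i.e. for $\kappa \le p$. Hence $B(p)$ holds for the RKN method \eqref{RKN:qua}. \emph{Next} I would treat $CN$: the identity $\sum_j b_j \bar{A}_{c_i,c_j} c_j^{\kappa-1} = c_i^{\kappa+1}/(\kappa(\kappa+1))$ is obtained by applying the quadrature to $\int_0^1 \bar{A}_{\tau,\sigma}\sigma^{\kappa-1}\,\dif\sigma$. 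The integrand, viewed as a function of the quadrature variable $\sigma$, is a polynomial of degree $\pi_A^\sigma + \kappa - 1$, so the quadrature reproduces the exact value $\mathcal{CN}(\eta)$ gives whenever $\pi_A^\sigma + \kappa - 1 \le p$, i.e. $\kappa \le p - \pi_A^\sigma + 1$; combined with $\mathcal{CN}(\eta)$ being available only up to $\kappa \le \eta - 1$, we get $CN(\alpha)$ with $\alpha = \min(\eta, p - \pi_A^\sigma + 1)$. An entirely analogous argument applied to $\int_0^1 B_\tau C_\tau^{\kappa-1}\bar{A}_{\tau,\sigma}\,\dif\tau$ — now a polynomial of degree $\pi_A^\tau + \kappa - 1$ in the quadrature variable $\tau$ — yields $DN(\beta)$ with $\beta = \min(\zeta, p - \pi_A^\tau + 1)$.

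\emph{Then} I would check the companion condition $\bar{b}_i = b_i(1 - c_i)$, which is built into tableau \eqref{RKN:qua} by construction since $\bar{B}_\tau = B_\tau(1-C_\tau) = 1-\tau$ forces $b_i\bar{B}_{c_i} = b_i(1-c_i)$. With $B(p)$, $CN(\alpha)$, $DN(\beta)$, and this relation all in hand, Theorem \ref{ord_RKN} immediately gives order at least $\min(p, 2\alpha+2, \alpha+\beta)$, completing the proof. \emph{The main obstacle} is the degree-counting bookkeeping in the $CN$ and $DN$ steps: one must be precise that it is the quadrature variable (not the free index/parameter $c_i$ or $c_j$) whose polynomial degree controls exactness, and must verify that the right-hand sides of $\mathcal{CN}$ and $\mathcal{DN}$ — namely $C_\tau^{\kappa+1}/(\kappa(\kappa+1))$ and the quadratic-in-$C_\sigma$ expression — are themselves reproduced correctly when $C_\tau = \tau$ is sampled at the nodes (they are polynomials in $c_i$ of low degree, hence cause no further restriction beyond those already recorded). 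Once this is set up cleanly, the rest is a direct appeal to the classical RKN order theorem.
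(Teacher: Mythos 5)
Your proposal is correct in substance and follows essentially the same route as the argument behind the cited result: convert the continuous simplifying assumptions $\mathcal{B}$, $\mathcal{CN}(\eta)$, $\mathcal{DN}(\zeta)$ into the discrete ones $B(p)$, $CN(\alpha)$, $DN(\beta)$ for the tableau \eqref{RKN:qua} by exploiting exactness of the quadrature on the polynomial integrands, note $\bar{b}_i=b_i(1-c_i)$ by construction, and then invoke Theorem \ref{ord_RKN}. One bookkeeping correction: a quadrature formula of order $p$ is exact for polynomials of degree at most $p-1$, not $p$, so the correct exactness ranges are $\kappa\le p-\pi_A^{\sigma}$ for the $CN$ identities and $\kappa\le p-\pi_A^{\tau}$ for the $DN$ identities; since $CN(\alpha)$ (resp. $DN(\beta)$) only requires the identities for $\kappa\le\alpha-1$ (resp. $\kappa\le\beta-1$), this gives precisely $\alpha=\min(\eta,\,p-\pi_A^{\sigma}+1)$ and $\beta=\min(\zeta,\,p-\pi_A^{\tau}+1)$ — your criterion ``$\pi_A^{\sigma}+\kappa-1\le p$'' is off by one, and it is only because of a compensating off-by-one in how you pass from the range of $\kappa$ to the index of $CN$ that your stated $\alpha$, $\beta$ come out right; with the corrected count the argument is clean and complete.
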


\section{Conditions for the symmetry of csRKN methods}

Now let us introduce the definition of symmetric methods and then
show the conditions for a csRKN method to be symmetric.

\begin{defn}\cite{hairerlw06gni}
A numerical one-step method $\Phi_h$ is called symmetric if it
satisfies
$$\Phi^*_h=\Phi_h,$$
where $\Phi^*_h=\Phi^{-1}_{-h}$ is referred to as the adjoint method
of $\Phi_h$.
\end{defn}
Symmetry implies that the original method and the adjoint method
give identical numerical results. An attractive property of
symmetric integrators is that they possess an \emph{even order}
\cite{hairerlw06gni}. By definition, a one-step method
$z_1=\Phi_h(z_0; t_0,t_1)$ is symmetric if exchanging
$h\leftrightarrow -h$, $z_0\leftrightarrow z_1$ and
$t_0\leftrightarrow t_1$ leaves the original method unaltered.

\begin{thm}\label{symm_cond_ori}
If the coefficients of the csRKN method
\eqref{eq:csrkn1}-\eqref{eq:csrkn3} satisfy
\begin{equation}\label{sym_conds01}
\begin{split}
C_\tau&=1-C_{1-\tau},\\
\bar{A}_{\tau,\si}&=B_{1-\si}(1-C_{1-\tau})-\bar{B}_{1-\si}+\bar{A}_{1-\tau,1-\si},\\
\bar{B}_\tau&=B_{1-\tau}-\bar{B}_{1-\tau},\\
B_\tau&=B_{1-\tau},
\end{split}
\end{equation}
for $\forall\,\tau,\,\si\in[0,1]$, then the method is symmetric.
\end{thm}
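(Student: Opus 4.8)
The plan is to apply the definition of symmetry directly: I will compute the adjoint method $\Phi_h^* = \Phi_{-h}^{-1}$ of the csRKN scheme \eqref{eq:csrkn1}-\eqref{eq:csrkn3} by performing the substitution $h \leftrightarrow -h$, $(q_0,q_0') \leftrightarrow (q_1,q_1')$, $t_0 \leftrightarrow t_1 = t_0 + h$, and then match the resulting scheme coefficient-by-coefficient against the original csRKN tableau. First I would write down the exchanged system: starting from \eqref{eq:csrkn2}-\eqref{eq:csrkn3} solved for $(q_0,q_0')$ in terms of $(q_1,q_1')$ with $h$ negated, and from \eqref{eq:csrkn1} the exchanged internal-stage relation. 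The key bookkeeping device is the reparametrization of the integration variable: I will set $\hat\tau = 1-\tau$, $\hat\sigma = 1-\sigma$, so that the internal stages $Q_\tau$ of the adjoint method get relabelled as $\hat Q_{\hat\tau} = Q_{1-\hat\tau}$. Under this change of variable the stage argument $t_0 + C_\sigma h$ becomes, after negating $h$ and shifting the base point to $t_1$, an expression that must be rewritten as $t_0 + C_{\hat\sigma} h$; this is exactly what forces the first condition $C_\tau = 1 - C_{1-\tau}$.

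The core of the argument is then a substitution computation. From \eqref{eq:csrkn3} with the exchange one gets
\begin{equation*}
q_0' = q_1' - h\int_0^1 B_\tau f(t_0 + C_\tau h, Q_\tau)\,\dif\tau
     = q_1' - h\int_0^1 B_{1-\hat\tau} f\big(t_0 + (1-C_{1-\hat\tau})h,\, \hat Q_{\hat\tau}\big)\,\dif\hat\tau,
\end{equation*}
and comparing with the requirement that the adjoint's $q_1'$-update read $q_1' = q_0' + h\int_0^1 B_{\hat\tau} f(\cdots)\,\dif\hat\tau$ yields $B_\tau = B_{1-\tau}$ together with the consistency of the time argument. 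Doing the same for the $q_1$-update \eqref{eq:csrkn2}: after the exchange, $q_0 = q_1 - hq_1' + h^2\int_0^1 \bar B_\tau f(\cdots)\,\dif\tau$; one then substitutes $q_1' = q_0' + h\int B\, f$ (just derived) to eliminate $q_1'$, collects the $h^2$ terms, and matches against $q_1 = q_0 + hq_0' + h^2\int_0^1 \bar B_{\hat\tau} f(\cdots)\,\dif\hat\tau$. This produces $\bar B_\tau = B_{1-\tau} - \bar B_{1-\tau}$. Finally the internal-stage equation \eqref{eq:csrkn1}: the exchanged version expresses $Q_\tau$ in terms of $q_1, q_1'$ and the integral of $\bar A$; substituting the relations for $q_1$ and $q_1'$ back in terms of $q_0, q_0'$ and rewriting everything in the hatted variables, the coefficient of $f(t_0 + C_{\hat\sigma}h, \hat Q_{\hat\sigma})$ must equal $\bar A_{\hat\tau,\hat\sigma}$, which after using the already-established relations for $C$, $B$, $\bar B$ gives precisely $\bar A_{\tau,\sigma} = B_{1-\sigma}(1-C_{1-\tau}) - \bar B_{1-\sigma} + \bar A_{1-\tau,1-\sigma}$.

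The main obstacle is the internal-stage identity: it requires carefully tracking how $q_1$ and $q_1'$, when re-expressed through $q_0$, $q_0'$, contribute an extra term of the form $B_{1-\sigma}(1 - C_{1-\tau})$ (the $hq_1'$ term inside the $q_1$-substitution combined with the $hC_\tau q_0'$ drift term) and a term $-\bar B_{1-\sigma}$ (from the $h^2\bar B$ term in the $q_1$-update), on top of the relabelled $\bar A_{1-\tau,1-\sigma}$ from the original double integral. Keeping the base-point shift $t_1 = t_0 + h$ straight throughout — so that every $f$-argument is consistently written relative to $t_0$ — is the part most prone to sign errors, and I would verify the bookkeeping by checking that under \eqref{sym_conds01} the resulting adjoint tableau coincides with $(\bar A_{\tau,\sigma}, \bar B_\tau, B_\tau, C_\tau)$ term by term. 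I would also remark that in the important special case $B_\tau = 1$, $C_\tau = \tau$, $\bar B_\tau = B_\tau(1-C_\tau)$, the conditions \eqref{sym_conds01} collapse to the single symmetry requirement $\bar A_{\tau,\sigma} + \bar A_{1-\tau,1-\sigma} = \tau$, which is the form used in the constructions of the next section.
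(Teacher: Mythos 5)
Your proposal follows essentially the same route as the paper's proof: build the adjoint by the exchange $h\leftrightarrow-h$, $(q_0,q_0')\leftrightarrow(q_1,q_1')$, $t_0\leftrightarrow t_1$, eliminate $q_1'$ and $q_1$ by back-substitution, relabel $\tau\to1-\tau$, $\sigma\to1-\sigma$, and match the adjoint tableau $(\bar A^*,\bar B^*,B^*,C^*)$ against the original, which is exactly how the paper obtains \eqref{sym_conds01}. Two small corrections: in your first displayed equation the time argument after the exchange should be $t_1-C_\tau h=t_0+(1-C_\tau)h$, not $t_0+C_\tau h$ (your subsequent hatted expression is consistent with the correct form, so this is only a slip); and your closing remark is wrong as stated --- in the special case $B_\tau=1$, $C_\tau=\tau$, $\bar B_\tau=1-\tau$ the conditions collapse to the \emph{difference} relation $\bar A_{\tau,\sigma}-\bar A_{1-\tau,1-\sigma}=\tau-\sigma$ (equation \eqref{eq:A} in the paper), not $\bar A_{\tau,\sigma}+\bar A_{1-\tau,1-\sigma}=\tau$.
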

\begin{proof}
Firstly, let us establish the adjoint method. From
\eqref{eq:csrkn1}-\eqref{eq:csrkn3}, by interchanging $t_0, q_0,
q'_0, h$ with $t_1, q_1, q'_1, -h$ respectively, we have
\begin{subequations}
    \begin{alignat}{3}
     \label{eq:rever1}
&Q_\tau=q_1 -hC_\tau q'_1 +h^2\int_{0}^{1} \bar{A}_{\tau, \si} f(t_1-C_\si h, Q_\si) \dif \si, \;\;\tau \in[0, 1], \\
     \label{eq:rever2}
&q_{0}=q_1- h q'_1+h^2 \int_{0}^{1} \bar{B}_\tau  f(t_1-C_\tau h, Q_\tau) \dif\tau, \\
     \label{eq:rever3}
&q'_0 = q'_1-h\int_{0}^{1} B_\tau f(t_1-C_\tau h, Q_\tau) \dif\tau.
   \end{alignat}
\end{subequations}
Notice that $t_1-C_\tau h=t_0+(1-C_\tau)h$, thus \eqref{eq:rever3}
becomes
\begin{equation}\label{recast01}
q'_1=q'_0+h\int_{0}^{1} B_\tau f(t_0+(1-C_\tau)h, Q_\tau) d\tau.
\end{equation}
Substituting it into \eqref{eq:rever2} yields
\begin{equation}\label{recast02}
q_1=q_{0}+hq'_0+h^2 \int_{0}^{1} (B_\tau-\bar{B}_\tau)
f(t_0+(1-C_\tau)h, Q_\tau)\dif\tau.
\end{equation}
Next, by inserting \eqref{recast01} and \eqref{recast02} into
\eqref{eq:rever1}, it follows that
\begin{equation}\label{recast03}
Q_\tau=q_0+h(1-C_\tau)q'_0+h^2\int_{0}^{1}(B_\si(1-C_\tau)-\bar{B}_\si+\bar{A}_{\tau,
\si}) f(t_0+(1-C_\si) h, Q_\si) \dif \si.
\end{equation}
By replacing $\tau$ and $\si$ with $1-\tau$ and $1-\si$
respectively, we can recast \eqref{recast03}, \eqref{recast02} and
\eqref{recast01} as
\begin{equation}\label{adjo}
\begin{split}
&Q^*_\tau=q_0+hC^*_\tau q'_0 +h^2\int_{0}^{1}\bar{A}^*_{\tau, \si} f(t_0+C^*_\si h, Q^*_\si) \dif\si, \;\;\tau \in[0, 1], \\
&q_{1}=q_0+ h q'_0+h^2 \int_{0}^{1} \bar{B}^*_\tau  f(t_0+C^*_\tau h, Q^*_\tau) \dif\tau, \\
&q'_1 = q'_0 +h\int_{0}^{1} B^*_\tau f(t_0+C^*_\tau h, Q^*_\tau)
\dif\tau,
\end{split}
\end{equation}
where $Q^*_\tau=Q_{1-\tau},\,\tau\in[0,1]$ and
\begin{equation}\label{adjo:coe}
\begin{split}
C^*_\tau&=1-C_{1-\tau},\\
\bar{A}^*_{\tau,\si}&=B_{1-\si}(1-C_{1-\tau})-\bar{B}_{1-\si}+\bar{A}_{1-\tau,
1-\si},\\
\bar{B}^*_\tau&=B_{1-\tau}-\bar{B}_{1-\tau},\\
B^*_\tau&=B_{1-\tau},
\end{split}
\end{equation}
for $\forall\,\tau,\,\si\in[0,1]$. Therefore, we have get the
adjoint method defined by \eqref{adjo} and \eqref{adjo:coe}. Given
that a csRKN method can be uniquely determined by its coefficients,
hence if we require the following condition
$$C_\tau=C^*_\tau,\,\bar{A}_{\tau,\si}=\bar{A}^*_{\tau,\si},\,\bar{B}_\tau=\bar{B}^*_\tau,\,B_\tau=B^*_\tau,$$
namely the condition \eqref{sym_conds01}, then the original method
is symmetric.
\end{proof}
In the following we present a preferable result for ease of devising
symmetric csRKN methods.
\begin{thm}\label{constr_symmcsRKN}
Suppose that $\bar{B}_\tau=B_\tau(1-C_\tau),\,B_\tau=1,\,
C_\tau=\tau$, then the csRKN method denoted by
$(\bar{A}_{\tau,\si},\bar{B}_\tau,B_\tau,C_\tau)$ is symmetric, if
$\bar{A}_{\tau,\si}$ possesses the following form in terms of
Legendre polynomials
\begin{equation}\label{sym_conds02}
\begin{split}
&\bar{A}_{\tau,\si}=\alpha_{(0,0)}-\frac{1}{2}\xi_1P_1(\si)
+\frac{1}{2}\xi_1P_1(\tau)+\sum\limits_{i+j\,\text{is}\,\text{even}
\atop i+j>1}\alpha_{(i,j)}
P_i(\tau)P_j(\sigma),\quad\tau,\si\in[0,1],
\end{split}
\end{equation}
where $\xi_\iota=\frac{1}{2\sqrt{4\iota^2-1}}$ and $\alpha_{(i,j)}$
are arbitrary real numbers.
\end{thm}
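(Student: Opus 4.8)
The plan is to obtain Theorem~\ref{constr_symmcsRKN} as a direct consequence of Theorem~\ref{symm_cond_ori}: I would simply check that the four symmetry conditions \eqref{sym_conds01} are satisfied under the present normalization. First I would substitute $B_\tau=1$, $C_\tau=\tau$ and hence $\bar{B}_\tau=B_\tau(1-C_\tau)=1-\tau$ into \eqref{sym_conds01}. The first condition $C_\tau=1-C_{1-\tau}$ becomes $\tau=1-(1-\tau)$, the third becomes $1-\tau=1-\bigl(1-(1-\tau)\bigr)$, and the fourth becomes $1=1$; all three are identities. So only the second condition survives, and after inserting $B_{1-\si}=1$, $1-C_{1-\tau}=\tau$ and $\bar{B}_{1-\si}=1-(1-\si)=\si$ it collapses to the single scalar requirement
\begin{equation*}
\bar{A}_{\tau,\si}-\bar{A}_{1-\tau,1-\si}=\tau-\si,\qquad \tau,\si\in[0,1].
\end{equation*}
Thus the whole statement reduces to verifying this one identity for the series \eqref{sym_conds02}.

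The key tool is the reflection symmetry of the normalized shifted Legendre polynomials, $P_k(1-x)=(-1)^kP_k(x)$, which follows immediately from Rodrigues' formula: the change of variable $x\mapsto 1-x$ fixes $x^2-x$, so the $k$-fold derivative produces exactly the sign $(-1)^k$. Applying this with $x=\tau$ and $x=\si$ gives $P_i(1-\tau)P_j(1-\si)=(-1)^{i+j}P_i(\tau)P_j(\si)$. I would then evaluate $\bar{A}_{\tau,\si}-\bar{A}_{1-\tau,1-\si}$ term by term in \eqref{sym_conds02}. The constant $\alpha_{(0,0)}$ and every term $\alpha_{(i,j)}P_i(\tau)P_j(\si)$ with $i+j$ even are left invariant by $(\tau,\si)\mapsto(1-\tau,1-\si)$ and therefore cancel in the difference; the two remaining terms $\tfrac{1}{2}\xi_1P_1(\tau)$ and $-\tfrac{1}{2}\xi_1P_1(\si)$ each change sign and hence double, so that
\begin{equation*}
\bar{A}_{\tau,\si}-\bar{A}_{1-\tau,1-\si}=\xi_1P_1(\tau)-\xi_1P_1(\si).
\end{equation*}

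To finish I would compute $\xi_1P_1(x)$ explicitly: since $\xi_1=\tfrac{1}{2\sqrt{3}}$ and $P_1(x)=\sqrt{3}(2x-1)$, one has $\xi_1P_1(x)=x-\tfrac{1}{2}$, whence the right-hand side above equals $(\tau-\tfrac{1}{2})-(\si-\tfrac{1}{2})=\tau-\si$, which is precisely the required identity. By Theorem~\ref{symm_cond_ori}, all conditions \eqref{sym_conds01} hold and the csRKN method is symmetric. I do not anticipate a genuine obstacle; the only delicate point is the parity bookkeeping, and in fact this is exactly the structural content of the theorem: restricting the double Legendre series to indices with $i+j$ even guarantees the cancellation, while the asymmetrically placed degree-one terms are calibrated so that their surviving contribution reproduces the inhomogeneity $\tau-\si$ forced by the adjoint relation. (If one also wants these symmetric methods to inherit the order estimates of Theorem~\ref{ord_csRKN}, a short additional remark is needed to note that the form \eqref{sym_conds02}, suitably truncated, is compatible with the $\mathcal{CN}(\eta)$/$\mathcal{DN}(\zeta)$ parametrization \eqref{expan6}; but that is a consistency comment, not part of the symmetry proof.)
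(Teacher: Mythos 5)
Your proposal is correct and follows essentially the same route as the paper's own proof: reduce, via Theorem~\ref{symm_cond_ori} and the normalization $\bar{B}_\tau=1-\tau,\,B_\tau=1,\,C_\tau=\tau$, to the single identity $\bar{A}_{\tau,\si}-\bar{A}_{1-\tau,1-\si}=\tau-\si$, and settle it with the reflection property $P_\iota(1-t)=(-1)^\iota P_\iota(t)$ together with $\xi_1P_1(t)=t-\tfrac{1}{2}$. The only (harmless) difference is direction: you verify sufficiency of the given form directly, whereas the paper expands a general $\bar{A}_{\tau,\si}$ in the Legendre basis and reads off the coefficient constraints, which is the same computation run in reverse.
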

\begin{proof}
By noticing $\bar{B}_\tau=B_\tau(1-C_\tau),\,B_\tau=1,\,
C_\tau=\tau$, it suffices for us to consider the second condition
given in \eqref{sym_conds01}. By using a simple identity
$\tau=\frac{1}{2}P_0(\tau)+\xi_1P_1(\tau)$, it implies
\begin{equation}\label{eq:A}
\bar{A}_{\tau,\,\sigma}-\bar{A}_{1-\tau,\,1-\sigma}=\tau-\sigma=
\xi_1(P_1(\tau)-P_1(\sigma)).
\end{equation}

Next, let us consider the following expansion of
$\bar{A}_{\tau,\,\sigma}$ in terms of the Legendre orthogonal basis
$\{P_i(\tau)P_j(\sigma):\,i,\,j\geq0\}$,
\begin{equation*}
\bar{A}_{\tau,\,\sigma}=\sum\limits_{i,j\geq0}\alpha_{(i,j)}
P_i(\tau)P_j(\sigma),\quad\alpha_{(i,j)}\in \mathbb{R},
\end{equation*}
and then by replacing $\tau$ and $\si$ with $1-\tau$ and $1-\si$
respectively, with the help of $P_\iota(1-t)=(-1)^\iota
P_\iota(t)\,(\iota\geq0)$, we have
\begin{equation*}
\bar{A}_{1-\tau,\,1-\sigma}=\sum\limits_{i,j\geq0}(-1)^{i+j}\alpha_{(i,j)}P_i(\tau)P_j(\sigma).
\end{equation*}
Substituting the above two expressions into \eqref{eq:A} and
collecting the like basis, follows
$$\alpha_{(0,1)}=-\frac{1}{2}\xi_1,\,\alpha_{(1,0)}=\frac{1}{2}\xi_1,
\;\,\alpha_{(i,j)}=0,\;\text{when}\;i+j\;\text{is}\;\text{odd}\;\text{and}\;
i+j>1,$$ which completes the proof.
\end{proof}

By putting Theorem \ref{thm:csRKN} and Theorem
\ref{constr_symmcsRKN} together, we can devise symmetric integrators
of arbitrarily high order. Besides, as an alternative way, we can
use the same technique as presented in \cite{Tangz15spc} to
construct symmetric integrators for arbitrary order, that is,
substituting \eqref{sym_conds02} into the order conditions (see
\cite{Tangz15spc}, Page 12) one by one and determining the
corresponding parameters $\alpha_{(i,j)}$. As symmetric methods
possess an even order, it is sufficient to consider those order
conditions for odd orders, so we can increase two orders per step.
We present the the following result without a proof (please see
\cite{Tangz15spc} for a similar proof).
\begin{thm}
Suppose that $\bar{A}_{\tau,\si}$ is in the form \eqref{sym_conds02}
and $\bar{B}_\tau=B_\tau(1-C_\tau),\,B_\tau=1,\, C_\tau=\tau$. Then
the corresponding csRKN method is symmetric and of order $2$ at
least. If we additionally require $\alpha_{(0,0)}=\frac{1}{6}$, then
the method is of order $4$ at least. Moreover, if we further require
that
\begin{equation}\label{coef:sixth}
\begin{split}
&\alpha_{(0,0)}=\frac{1}{6},\,\alpha_{(1,1)}=-\frac{1}{10},
\,\alpha_{(2,0)}=\alpha_{(0,2)}=\frac{\sqrt{5}}{60},\\
&\alpha_{(i,0)}=0,\quad \text{for}\;\text{even}\;i>2,
\end{split}
\end{equation}
then the method is of order $6$ at least.
\end{thm}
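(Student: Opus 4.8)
The symmetry assertion requires no new work: the hypotheses $\bar{B}_\tau=B_\tau(1-C_\tau)$, $B_\tau=1$, $C_\tau=\tau$ together with the Legendre form \eqref{sym_conds02} are exactly the hypotheses of Theorem~\ref{constr_symmcsRKN}, so the method is symmetric. The structural fact I will lean on throughout is that a symmetric one-step method has even order; hence, to prove ``order at least $2m$'' it suffices to verify all order conditions of order $\le 2m-1$, because the attained order then cannot equal the odd number $2m-1$.

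For the order-$2$ claim: $B_\tau=1$, $C_\tau=\tau$ yield $\mathcal{B}(\infty)$, the relation $\bar{B}_\tau=B_\tau(1-C_\tau)$ holds by hypothesis, and $\mathcal{CN}(1)$, $\mathcal{DN}(1)$ are vacuous, so Theorem~\ref{ord_csRKN} with $\eta=\zeta=1$ already gives order at least $\min\{\infty,4,2\}=2$. For the order-$4$ claim it is enough, by the parity remark, to reach order $3$, i.e., to verify the order conditions of order $\le 3$. All of them but one are automatic: among the velocity conditions, all but the cubic ``bushy'' one are instances of $\mathcal{B}(\infty)$, and the two position conditions follow from these together with $\bar{B}_\tau=B_\tau(1-C_\tau)$; the one remaining velocity condition reads $\int_0^1\!\int_0^1 B_\tau\bar{A}_{\tau,\si}\,\dif\tau\,\dif\si=\tfrac{1}{6}$. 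Substituting \eqref{sym_conds02} and using $\int_0^1 P_k(x)\,\dif x=\delta_{k0}$ together with orthonormality collapses this double integral to $\alpha_{(0,0)}$, so the condition is precisely $\alpha_{(0,0)}=\tfrac{1}{6}$, and order $4$ follows.

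For the order-$6$ claim, by parity it suffices to reach order $5$; since order $4$ is already established, only the order-$5$ conditions remain, and those not involving $\bar{A}$ are again automatic. The survivors are the five order-$5$ velocity conditions that involve $\bar{A}$ --- written, in the notation of the quadrature scheme \eqref{RKN:qua}, as $\sum_i b_ic_i^2\big(\sum_j\bar{a}_{ij}\big)=\tfrac{1}{10}$, $\sum_{i,j}b_ic_i\bar{a}_{ij}c_j=\tfrac{1}{30}$, $\sum_{i,j}b_i\bar{a}_{ij}c_j^2=\tfrac{1}{60}$, $\sum_{i,j,k}b_i\bar{a}_{ij}\bar{a}_{jk}=\tfrac{1}{120}$, $\sum_i b_i\big(\sum_j\bar{a}_{ij}\big)^2=\tfrac{1}{20}$ --- together with the corresponding position conditions. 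In the continuous setting these are integrals of $\bar{A}_{\tau,\si}$, of $\bar{A}_{\tau,\si}\bar{A}_{\si,\rho}$, and of $\bar{A}_{\tau,\si}^2$ weighted by $1,\,C_\si,\,C_\si^2$; expressing $C_\si^{\lambda-1}$ in $P_0,P_1,P_2$, substituting \eqref{sym_conds02}, and using orthonormality of the $P_k$ (in Parseval form for the terms quadratic in $\bar{A}$) reduces each condition to a scalar relation among the $\alpha_{(i,j)}$. Carrying this out, the two ``$\sum_j\bar{a}_{ij}$'' trees force $\alpha_{(2,0)}=\tfrac{\sqrt{5}}{60}$ and, via $\sum_{k\ge0}\alpha_{(k,0)}^2=\tfrac{1}{20}$, $\alpha_{(i,0)}=0$ for even $i>2$ (the coefficients $\alpha_{(i,0)}$ with $i\ge3$ odd already vanish in \eqref{sym_conds02}); the mixed tree $\sum b_ic_i\bar{a}_{ij}c_j$ forces $\alpha_{(1,1)}=-\tfrac{1}{10}$; the composed tree $\sum b_i\bar{a}_{ij}\bar{a}_{jk}$ forces $\alpha_{(0,2)}=\tfrac{\sqrt{5}}{60}$; and the remaining order-$5$ conditions (including the position ones) are then automatically satisfied. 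This is exactly \eqref{coef:sixth}, so order $6$ follows.

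The main obstacle is this last step. One must first assemble the complete list of order-$5$ csRKN order conditions (as in \cite{Tangz15spc}, including the position-component conditions), then isolate those that survive the simplifications $B_\tau=1$, $C_\tau=\tau$, $\bar{B}_\tau=B_\tau(1-C_\tau)$ and the symmetric form \eqref{sym_conds02}, and finally carry out the Legendre-coefficient bookkeeping showing the survivors are jointly equivalent to \eqref{coef:sixth}. Each individual reduction is a routine orthogonality computation, but keeping track of which conditions remain --- and verifying that \eqref{coef:sixth} is exactly (no more, no less than) what is forced --- is where the care lies.
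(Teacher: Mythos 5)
Your proposal is correct and follows essentially the route the paper itself indicates (the paper states this theorem without proof, pointing to the analogous argument in \cite{Tangz15spc}): symmetry via Theorem~\ref{constr_symmcsRKN}, the even-order property to step up two orders at a time, and substitution of \eqref{sym_conds02} into the odd-order RKN order conditions, which by Legendre orthogonality collapse exactly to $\alpha_{(0,0)}=\tfrac16$ at order $3$ and to \eqref{coef:sixth} at order $5$. Your bookkeeping of the order-$5$ velocity conditions and the reduction of the position conditions via $\bar{B}_\tau=B_\tau(1-C_\tau)$ check out, so no gap remains.
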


\section{Symmetric RKN method}

In this section, we show that symmetric RKN methods can be easily
derived from symmetric csRKN methods by using quadrature formulas.

\begin{thm}\label{symm_quad}
If the coefficients of the underlying symmetric csRKN method satisfy
\eqref{sym_conds01}, then the associated RKN method
\eqref{RKN:qua_orig} is symmetric, provided that the weights and
abscissae of the quadrature formula satisfy $b_{s+1-i}=b_i$ and
$c_{s+1-i}=1-c_i$ for all $i$.
\end{thm}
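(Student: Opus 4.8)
The plan is to show that applying the quadrature rule $(b_i,c_i)_{i=1}^s$ to the symmetric csRKN method produces an $s$-stage RKN method whose Butcher tableau \eqref{RKN:qua_orig} satisfies the discrete symmetry conditions, namely that adjoining (swapping $h\leftrightarrow-h$, $q_0\leftrightarrow q_1$, $q'_0\leftrightarrow q'_1$, $t_0\leftrightarrow t_1$) reproduces the same tableau after the reindexing $i\mapsto s+1-i$. The discrete symmetry conditions for an RKN method with coefficients $(c_i,\hat a_{ij},\hat b_i,\hat b'_i)$ (writing $\hat a_{ij}=b_j\bar A_{c_i,c_j}$, $\hat b_i=b_i\bar B_{c_i}$, $\hat b'_i=b_iB_{c_i}$, and $C_{c_i}=c_i$ here) are the finite-dimensional analogues of \eqref{sym_conds01}: $c_{s+1-i}=1-c_i$, $\hat b'_{s+1-i}=\hat b'_i$, $\hat b_{s+1-i}=\hat b'_i-\hat b_i$ wait — more precisely $\hat b_{s+1-i}$ relates to $\hat b'_i,\hat b_i$ exactly as in the continuous case, and $\hat a_{s+1-i,\,s+1-j}=\hat b'_j(1-c_i)-\hat b_j+\hat a_{ij}$. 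So the heart of the proof is to verify each of these four discrete identities by feeding the quadrature node $c_i$ into the continuous conditions \eqref{sym_conds01} and exploiting the symmetry $b_{s+1-i}=b_i$, $c_{s+1-i}=1-c_i$ of the quadrature rule.

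First I would set up the adjoint of the RKN method \eqref{RKN:qua_orig} explicitly, mimicking lines \eqref{eq:rever1}–\eqref{adjo:coe} of the proof of Theorem \ref{symm_cond_ori} but now with finite sums: interchange $(t_0,q_0,q'_0,h)$ with $(t_1,q_1,q'_1,-h)$, use $t_1-C_{c_i}h=t_0+(1-c_i)h$, back-substitute the stage equations to eliminate $q_1$ and $q'_1$, and read off that the adjoint RKN method has coefficients $c_i^*=1-c_i$, $\hat b_i'^{*}=\hat b'_i$, $\hat b_i^{*}=\hat b'_i-\hat b_i$, $\hat a_{ij}^{*}=\hat b'_j(1-c_i)-\hat b_j+\hat a_{ij}$, with permuted stage order $Q_i^*=Q_{s+1-i}$ not yet imposed. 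Then I would apply the relabelling $i\mapsto s+1-i$, $j\mapsto s+1-j$ to bring the adjoint tableau into "standard" order. Now the second step: substitute $c_i=C_{c_i}$ into the continuous symmetry relations \eqref{sym_conds01} evaluated at $\tau=c_i$, $\sigma=c_j$, together with $c_{s+1-i}=1-c_i$, and check that after multiplying through by the appropriate quadrature weights $b_i$ (and using $b_{s+1-i}=b_i$, $b_{s+1-j}=b_j$) the four relations become precisely the four discrete identities for the adjoint-and-relabelled tableau. For instance, $\hat a_{s+1-i,\,s+1-j}=b_{s+1-j}\bar A_{c_{s+1-i},c_{s+1-j}}=b_j\bar A_{1-c_i,1-c_j}$, and \eqref{sym_conds01} rewrites $\bar A_{1-c_i,1-c_j}=B_{c_j}(1-C_{c_i})-\bar B_{c_j}+\bar A_{c_i,c_j}$, so multiplying by $b_j$ gives exactly $\hat b'_j(1-c_i)-\hat b_j+\hat a_{ij}$ — matching the adjoint coefficient. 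The other three relations are analogous and shorter.

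The main obstacle — really the only place where care is needed — is the bookkeeping of the node/weight permutation interacting with the argument reflection $\tau\mapsto1-\tau$: one must be scrupulous that $C_{c_{s+1-i}}=C_{1-c_i}=1-C_{c_i}=1-c_i$ (using the first line of \eqref{sym_conds01}) so that the ``time'' arguments $t_0+C_{c_{s+1-i}}h$ in the permuted adjoint method line up with $t_0+(1-c_i)h$ from the original, and that the quadrature weight attached to stage $s+1-j$ in the sum $\sum_j b_{s+1-j}(\cdots)$ is correctly identified with $b_j$ before comparing coefficients. Once the adjoint is correctly reindexed, the verification is a direct substitution with no genuine analytic content — the symmetry of the quadrature rule exactly compensates for the reflection built into \eqref{sym_conds01}. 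I would conclude by invoking the fact (used implicitly throughout) that an RKN method is determined by its tableau, so coincidence of the adjoint-relabelled tableau with the original tableau is equivalent to symmetry of the method.
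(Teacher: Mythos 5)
Your proposal is correct and essentially reproduces the paper's argument: the paper likewise verifies the classical discrete symmetry conditions for the RKN tableau \eqref{RKN:qua_orig} by evaluating \eqref{sym_conds01} at the quadrature nodes and using $b_{s+1-i}=b_i$, $c_{s+1-i}=1-c_i$, the only difference being that the paper cites the discrete symmetry characterization of RKN methods from the literature rather than re-deriving the discrete adjoint as you propose. One small caveat: your identification $C_{c_i}=c_i$ amounts to assuming $C_\tau=\tau$, which Theorem \ref{symm_quad} does not require; keep the discrete abscissae as $C_{c_i}$ and use $C_{c_{s+1-i}}=C_{1-c_i}=1-C_{c_i}$ (the chain you already note), and your argument covers the general statement unchanged.
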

\begin{proof}
The symmetric condition for an $s$-stage classical RKN method
denoted by $(\bar{a}_{ij},\,\bar{b}_i,\,b_i,\,c_i)$ is known as
(see, e.g., \cite{okunbors92ecm})
\begin{equation*}
\begin{split}
c_i&=1-c_{s+1-i},\\
\bar{a}_{ij}&=b_{s+1-j}(1-c_{s+1-i})-\bar{b}_{s+1-j}+\bar{a}_{s+1-i,s+1-j},\\
\bar{b}_i&=b_{s+1-i}-\bar{b}_{s+1-i},\\
b_i&=b_{s+1-i},
\end{split}
\end{equation*}
for all $i, j=1,\cdots,s$. By using \eqref{sym_conds01}, we have
\begin{equation*}
\begin{split}
C_{c_i}&=1-C_{1-c_i},\\
\bar{A}_{c_i,c_j}&=B_{1-c_j}(1-C_{1-c_i})-\bar{B}_{1-c_j}+\bar{A}_{1-c_i,1-c_j},\\
\bar{B}_{c_i}&=B_{1-c_i}-\bar{B}_{1-c_i},\\
B_{c_i}&=B_{1-c_i},
\end{split}
\end{equation*}
for all $i, j=1,\cdots,s$. In view of $b_{s+1-i}=b_i$ and
$c_{s+1-i}=1-c_i$ for all $i$,  the coefficients
$(b_j\bar{A}_{c_i,c_j},\,b_i\bar{B}_{c_i},\,b_iB_{c_i},\,C_i)$ of
the associated RKN method satisfy
\begin{equation*}
\begin{split}
C_{c_i}&=1-C_{c_{s+1-i}},\\
b_j\bar{A}_{c_i,c_j}&=b_{s+1-j}B_{c_{s+1-j}}(1-C_{c_{s+1-i}})\\
&\quad-b_{s+1-j}\bar{B}_{c_{s+1-j}}+b_{s+1-j}\bar{A}_{c_{s+1-i},c_{s+1-j}},\\
b_i\bar{B}_{c_i}&=b_{s+1-i}B_{c_{s+1-i}}-b_{s+1-i}\bar{B}_{c_{s+1-i}},\\
b_iB_{c_i}&=b_{s+1-i}B_{c_{s+1-i}},
\end{split}
\end{equation*}
for all $i, j=1,\cdots,s$, which completes the proof by the
classical result.
\end{proof}
\begin{cor}
If $\bar{A}_{\tau,\si}$ takes the form \eqref{sym_conds02} and
$\bar{B}_\tau=B_\tau(1-C_\tau),\,B_\tau=1,\, C_\tau=\tau$, then by
using a quadrature formula $(b_i, c_i)_{i=1}^s$ with $b_{s+1-i}=b_i$
and $c_{s+1-i}=1-c_i$ for all $i$, the resulting RKN method
\eqref{RKN:qua} is symmetric.
\end{cor}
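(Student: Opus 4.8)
The plan is to obtain this corollary with essentially no fresh computation, by composing the two results that precede it. First I would note that the hypotheses of the corollary are exactly those of Theorem~\ref{constr_symmcsRKN}: the coefficient $\bar{A}_{\tau,\si}$ is the Legendre expansion \eqref{sym_conds02}, and $\bar{B}_\tau=B_\tau(1-C_\tau),\,B_\tau=1,\,C_\tau=\tau$. Hence the underlying csRKN method $(\bar{A}_{\tau,\si},\bar{B}_\tau,B_\tau,C_\tau)$ is symmetric, and in particular its coefficients satisfy the symmetry conditions \eqref{sym_conds01}. If one wants to see this directly: the last three lines of \eqref{sym_conds01} are immediate from $C_\tau=\tau$, $B_\tau\equiv1$ and $\bar{B}_\tau=B_\tau(1-C_\tau)$, while the second line is precisely what the proof of Theorem~\ref{constr_symmcsRKN} establishes from the parity structure of \eqref{sym_conds02}; so the verification of \eqref{sym_conds01} is already contained in the earlier argument.

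Next I would invoke Theorem~\ref{symm_quad}. Since the csRKN method just described satisfies \eqref{sym_conds01}, and since the chosen quadrature $(b_i,c_i)_{i=1}^s$ obeys $b_{s+1-i}=b_i$ and $c_{s+1-i}=1-c_i$ for all $i$, that theorem guarantees that the associated RKN method with tableau \eqref{RKN:qua_orig} is symmetric. Finally I would observe that under the specialization $\bar{B}_\tau=B_\tau(1-C_\tau),\,B_\tau=1,\,C_\tau=\tau$ the general tableau \eqref{RKN:qua_orig} reduces exactly to \eqref{RKN:qua} (with first column $c_i$ and $\bar{b}_i=b_i(1-c_i)$), so the RKN method \eqref{RKN:qua} produced by the corollary's construction is this same symmetric method, which completes the proof.

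As for the main obstacle: there really is none of substance, the corollary being a straightforward composition of Theorem~\ref{constr_symmcsRKN} with Theorem~\ref{symm_quad}. The only point that needs a moment's attention is the bookkeeping in the first step --- confirming that ``symmetric csRKN method'' in the sense of Theorem~\ref{constr_symmcsRKN} does translate into the coefficient identities \eqref{sym_conds01} that Theorem~\ref{symm_quad} takes as its hypothesis --- and this is exactly the equivalence made explicit in the proof of Theorem~\ref{symm_cond_ori}, where it is used that a csRKN method is uniquely determined by its coefficients, so that symmetry is equivalent to \eqref{sym_conds01}.
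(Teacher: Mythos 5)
Your proposal is correct and takes essentially the same route as the paper's (implicit) proof: the corollary is just the composition of Theorem~\ref{constr_symmcsRKN} (whose proof verifies that the form \eqref{sym_conds02} together with $\bar{B}_\tau=B_\tau(1-C_\tau)$, $B_\tau=1$, $C_\tau=\tau$ yields the coefficient identities \eqref{sym_conds01}) with Theorem~\ref{symm_quad} for a symmetric quadrature, plus the observation that \eqref{RKN:qua_orig} reduces to \eqref{RKN:qua} under these specializations. The only minor overstatement is describing symmetry as \emph{equivalent} to \eqref{sym_conds01} (the paper proves only sufficiency in Theorem~\ref{symm_cond_ori}), but your argument uses only the direction actually established, so nothing is affected.
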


Since the weights and abscissae of Gaussian-type and Lobatto-type
quadrature formulas satisfy $b_{s+1-i}=b_i$ and $c_{s+1-i}=1-c_i$
for all $i$, they can be used for devising symmetric RKN methods.

\begin{table}
\[\ba{c|c} \frac{1}{2} & \alpha\\[2pt]
\hline & \frac{1}{2}\\[2pt]
\hline & 1\ea\qquad \ba{c|cc} 0 & \frac{1}{2}\alpha&
\frac{1}{2}\alpha-\frac{1}{4}\\[2pt]
1 & \frac{1}{2}\alpha+\frac{1}{4}&
\frac{1}{2}\alpha\\[2pt]
\hline & \frac{1}{2}& 0\\[2pt]
\hline & \frac{1}{2}& \frac{1}{2} \ea\] \caption{Two families of
symmetric and symplectic RKN methods of order 2, by using Gaussian
(on the left) and Lobatto (on the right) quadrature formulas
respectively.}\label{exa:SmmRKN2}
\end{table}
\begin{exa}
If we take the coefficients
$(\bar{A}_{\tau,\si},\,\bar{B}_\tau,\,B_\tau,\,C_\tau)$ as
\begin{equation} \label{eq:2coeff}
\begin{split}
&\bar{A}_{\tau,\si}=\alpha-\frac{\sqrt{3}}{12}P_1(\si)+\frac{\sqrt{3}}{12}P_1(\tau),\\
&\bar{B}_\tau=1-\tau,\;B_\tau=1,\;C_\tau=\tau,
\end{split}
\end{equation}
with one parameter $\alpha$ being introduced, then we get a family
of symmetric csRKN methods with order $2$.  By Theorem $3.3$
presented in \cite{Tangz15spc} (see also Theorem $4.4$ in
\cite{Tangsz15hos}), such methods are also symplectic and thus
suitable for solving general second-order Hamiltonian systems.

By using suitable quadrature formulas with order $p\geq2$ we can get
symmetric RKN methods of order\footnote{This can be easily checked
by the classical order conditions that listed in
\cite{hairernw93sod} (see also \cite{Tangz15spc}).} $2$. The
resulting symmetric RKN methods are shown in Table
\ref{exa:SmmRKN2}.
\end{exa}

\begin{table}
    \[\ba{c|cc} \frac{3-\sqrt{3}}{6}&
    \frac{1+6\alpha}{12}&
    \frac{1-\sqrt{3}-6\alpha}{12}\\[2pt]
    \frac{3+\sqrt{3}}{6} & \frac{1+\sqrt{3}-6\alpha}{12}&
    \frac{1+6\alpha}{12}\\[2pt]
    \hline & \frac{3+\sqrt{3}}{12}& \frac{3-\sqrt{3}}{12}\\[2pt]
    \hline & \frac{1}{2}& \frac{1}{2} \ea \qquad \ba{c|ccc} 0 &
    \frac{1+18\alpha+6\sqrt{5}(\beta+\gamma)}{36} &
    \frac{-1-6\sqrt{5}(\beta-2\gamma)}{18} & \frac{-1-9\alpha+3\sqrt{5}(\beta+\gamma)}{18} \\[2pt]
    \frac{1}{2} & \frac{5+6\sqrt{5}(2\beta-\gamma)}{72}  &
    \frac{1-3\sqrt{5}(\beta+\gamma)}{9}&\frac{-1+6\sqrt{5}(2\beta-\gamma)}{72}\\[2pt]
    1 & \frac{2-9\alpha+3\sqrt{5}(\beta+\gamma)}{18}& \frac{5-6\sqrt{5}(\beta-2\gamma)}{18}
    &\frac{1+18\alpha+6\sqrt{5}(\beta+\gamma)}{36}\\[2pt]
    \hline & \frac{1}{6}& \frac{1}{3}& 0\\[2pt]
    \hline & \frac{1}{6}& \frac{2}{3}& \frac{1}{6} \ea \]
    \caption{Two families of symmetric RKN methods of order 4, by using Gaussian (2 nodes) and Lobatto (3
    nodes) quadrature formulae.}\label{exa:SmmRKN4}
\end{table}

\begin{exa}
If we take the coefficients
$(\bar{A}_{\tau,\si},\,\bar{B}_\tau,\,B_\tau,\,C_\tau)$ as
\begin{equation}\label{eq:4coeff}
\begin{split}
&\bar{A}_{\tau,\si}=\frac{1}{6}-\frac{\sqrt{3}}{12}P_1(\si)+\frac{\sqrt{3}}{12}P_1(\tau)+\alpha
P_1(\tau)P_1(\si)+\beta P_0(\tau)P_2(\si)+\gamma  P_0(\si)P_2(\tau),\\
&\bar{B}_\tau=1-\tau,\;B_\tau=1,\;C_\tau=\tau,
\end{split}
\end{equation}
then we get a family of symmetric csRKN methods with order $4$. By
using suitable quadrature formulas with order $p\geq4$ we get
symmetric RKN methods of order $4$, which are shown in Table
\ref{exa:SmmRKN4}.
\end{exa}

\begin{rem} We point out that:
    \begin{itemize}
        \item[(1)] The left family of RKN methods in Table
\ref{exa:SmmRKN4} are always symmetric
        and symplectic, while the right family of RKN methods of Table
\ref{exa:SmmRKN4} are symmetric and symplectic when $\beta=\gamma$.
        \item[(2)] The classical 3-stage Lobatto IIIA method \cite{hairerlw06gni} induces
        the following RKN method,
        \begin{equation}\label{LobattoIIIA}
        \ba{c|ccc} 0 &
        0 &
        0 &0 \\[2pt]
        \frac{1}{2} & \frac{1}{16}  &
        \frac{1}{12}&-\frac{1}{48}\\[2pt]
        1 & \frac{1}{6}& \frac{1}{3}
        &0\\[2pt]
        \hline & \frac{1}{6}& \frac{1}{3}& 0\\[2pt]
        \hline & \frac{1}{6}& \frac{2}{3}& \frac{1}{6} \ea
        \end{equation}
        which can be retrieved  by taking
        $\alpha=-\frac{1}{12},\beta=0,\gamma=\frac{\sqrt{5}}{60}$ in
        Table \ref{exa:SmmRKN4}.
        \item[(3)] The classical 3-stage Lobatto IIIB method  \cite{hairerlw06gni} induces
        the following RKN method,
        \begin{equation}\label{LobattoIIIB}
        \ba{c|ccc} 0 &
        0 &
        -\frac{1}{12} &0 \\[2pt]
        \frac{1}{2} & \frac{1}{12}  &
        \frac{1}{12}&0\\[2pt]
        1 & \frac{1}{6}& \frac{1}{4}
        &0\\[2pt]
        \hline & \frac{1}{6}& \frac{1}{3}& 0\\[2pt]
        \hline & \frac{1}{6}& \frac{2}{3}& \frac{1}{6} \ea
        \end{equation}
        which can be retrieved  by taking $\alpha=-\frac{1}{12},\beta=\frac{\sqrt{5}}{60},\gamma=0$ in
        Table \ref{exa:SmmRKN4}.
        \end{itemize}
\end{rem}

\begin{table}
\[\ba{c|ccc} \frac{5-\sqrt{15}}{10} & \frac{2+60\alpha}{135} &
\frac{19-6\sqrt{15}-120\alpha}{270}&
\frac{62-15\sqrt{15}+120\alpha}{540}\\[2pt]
\frac{1}{2} & \frac{19+6\sqrt{15}-120\alpha}{432}&
\frac{1+15\alpha}{27}&
\frac{19-6\sqrt{15}-120\alpha}{432}\\[2pt]
\frac{5+\sqrt{15}}{10}& \frac{62+15\sqrt{15}+120\alpha}{540}
&\frac{19+6\sqrt{15}-120\alpha}{270}&\frac{2+60\alpha}{135}\\[2pt]
\hline & \frac{5+\sqrt{15}}{36} & \frac{2}{9} & \frac{5-\sqrt{15}}{36}\\[2pt]
\hline & \frac{5}{18} & \frac{4}{9} & \frac{5}{18}
\ea\]\\
\[\ba{c|cccc}0 &
\frac{1+150\alpha}{360}&\frac{-5-3\sqrt{5}-300\alpha}{720}
& \frac{-5+3\sqrt{5}-300\alpha}{720}& \frac{2+75\alpha}{180}\\[2pt]
\frac{5-\sqrt{5}}{10}&\frac{29}{720}-\frac{11\sqrt{5}+100\alpha}{1200}&\frac{11+30\alpha}{360}
&\frac{29-15\sqrt{5}+30\alpha}{360}& -\frac{1}{720}+\frac{\sqrt{5}-100\alpha}{1200}\\[2pt]
\frac{5+\sqrt{5}}{10}&\frac{29}{720}+\frac{11\sqrt{5}-100\alpha}{1200}&\frac{29+15\sqrt{5}+30\alpha}{360}
&\frac{11+30\alpha}{360}&-\frac{1}{720}-\frac{\sqrt{5}+100\alpha}{1200}\\[2pt]
1&\frac{17+75\alpha}{180}&\frac{145+33\sqrt{5}-300\alpha}{720}&\frac{145-33\sqrt{5}-300\alpha}{720}
& \frac{1+150\alpha}{360}\\[2pt]
\hline & \frac{1}{12}& \frac{5+\sqrt{5}}{24}&\frac{5-\sqrt{5}}{24}& 0\\[2pt]
\hline & \frac{1}{12}& \frac{5}{12}& \frac{5}{12}&\frac{1}{12}\ea\]
\caption{Two families of symmetric and symplectic RKN methods of
order 6, by using Gaussian (on the top) and Lobatto (on the bottom)
quadrature formulas respectively.}\label{exa:SmmRKN6}
\end{table}

\begin{exa}
If we take the coefficients
$(\bar{A}_{\tau,\si},\,\bar{B}_\tau,\,B_\tau,\,C_\tau)$ as
\begin{equation}\label{eq:5coeff}
\begin{split}
&\bar{A}_{\tau,\si}=\sum\limits_{i+j\leq2}\alpha_{(i,j)}P_i(\tau)P_j(\sigma)
+\alpha P_2(\tau)P_2(\sigma),\\
&\bar{B}_\tau=1-\tau,\;B_\tau=1,\;C_\tau=\tau,
\end{split}
\end{equation}
where
$\alpha_{(0,1)}=-\frac{\sqrt{3}}{12},\,\alpha_{(1,0)}=\frac{\sqrt{3}}{12},$
and the remaining $\alpha_{(i,j)}$ satisfy \eqref{coef:sixth}, then
we get a family of $6$-order symmetric and symplectic csRKN methods.
By using suitable quadrature formulas with order $p\geq6$ we get
symmetric and symplectic RKN methods of order $6$, which are shown
in Table \ref{exa:SmmRKN6}.
\end{exa}

\section{Numerical experiments}

In this section, we perform some numerical results for comparing the
numerical behaviors of the presented methods. For this aim, we
consider the $4$-order method \eqref{LobattoIIIB} and the following
three $4$-order methods:
\begin{itemize}
  \item By taking $\alpha=0,\beta=\gamma=\frac{\sqrt{5}}{30}$ in Table
\ref{exa:SmmRKN4} it leads to a diagonally implicit symplectic and
symmetric RKN method
        \begin{equation}\label{diagsym}
        \ba{c|ccc} 0 &
        \frac{1}{12} &
        0 &0 \\[2pt]
        \frac{1}{2} & \frac{1}{12}  &
        0&0\\[2pt]
        1 & \frac{1}{6}& \frac{1}{3}
        &\frac{1}{12}\\[2pt]
        \hline & \frac{1}{6}& \frac{1}{3}& 0\\[2pt]
        \hline & \frac{1}{6}& \frac{2}{3}& \frac{1}{6} \ea
        \end{equation}
  \item By taking
$\alpha=-\frac{1}{10},\beta=\frac{\sqrt{5}}{150},\gamma=\frac{\sqrt{5}}{60}$
in Table \ref{exa:SmmRKN4} it gives the following symmetric RKN
method
        \begin{equation}\label{rkna}
        \ba{c|ccc} 0 &
        -\frac{1}{360} &
        -\frac{1}{90} & \frac{1}{72} \\[2pt]
        \frac{1}{2} & \frac{49}{720}  &
        \frac{13}{180}& -\frac{11}{720}\\[2pt]
        1 & \frac{13}{72}
        &\frac{29}{90}&-\frac{1}{360}\\[2pt]
        \hline & \frac{1}{6}& \frac{1}{3}& 0\\[2pt]
        \hline & \frac{1}{6}& \frac{2}{3}& \frac{1}{6} \ea
        \end{equation}
  \item By taking
$\alpha=-\frac{1}{10},\beta=\frac{\sqrt{5}}{60},\gamma=\frac{\sqrt{5}}{150}$
in Table \ref{exa:SmmRKN4} it gives the following symmetric RKN
method
        \begin{equation}\label{rknb}
        \ba{c|ccc} 0 &
        -\frac{1}{360} &
        -\frac{11}{180} & \frac{1}{72} \\[2pt]
        \frac{1}{2} & \frac{29}{360}  &
        \frac{13}{180}& -\frac{1}{360}\\[2pt]
        1 & \frac{13}{72}
        &\frac{49}{180}&-\frac{1}{360}\\[2pt]
        \hline & \frac{1}{6}& \frac{1}{3}& 0\\[2pt]
        \hline & \frac{1}{6}& \frac{2}{3}& \frac{1}{6} \ea
        \end{equation}
\end{itemize}

For convenience, we denote four symmetric RKN methods
\eqref{LobattoIIIB}, \eqref{diagsym}, \eqref{rkna} and \eqref{rknb}
by RKN-IIIB, RKN-Diagsymp, RKN-A and RKN-B methods respectively.
These methods are applied to the following perturbed pendulum
equation
\begin{align}\label{eq:second1}
q''=-\sin q-\frac{2}{5} \cos(2q),\;q(t_0)=0,\,q'(t_0)=2.5,
\end{align}
where the initial values are taken the same as that given in
\cite{Faouhp04ecw}. The system \eqref{eq:second1} is reversible with
respect to the reflection $p\leftrightarrow -p$ (here $p=q'$) and
the corresponding Hamiltonian function (energy) is given by
$H(p,q)=\frac{1}{2}p^2-\cos q +\frac{1}{5}\sin(2q)$.

\begin{figure}[htbp]
\includegraphics[height=80mm,width=\textwidth]{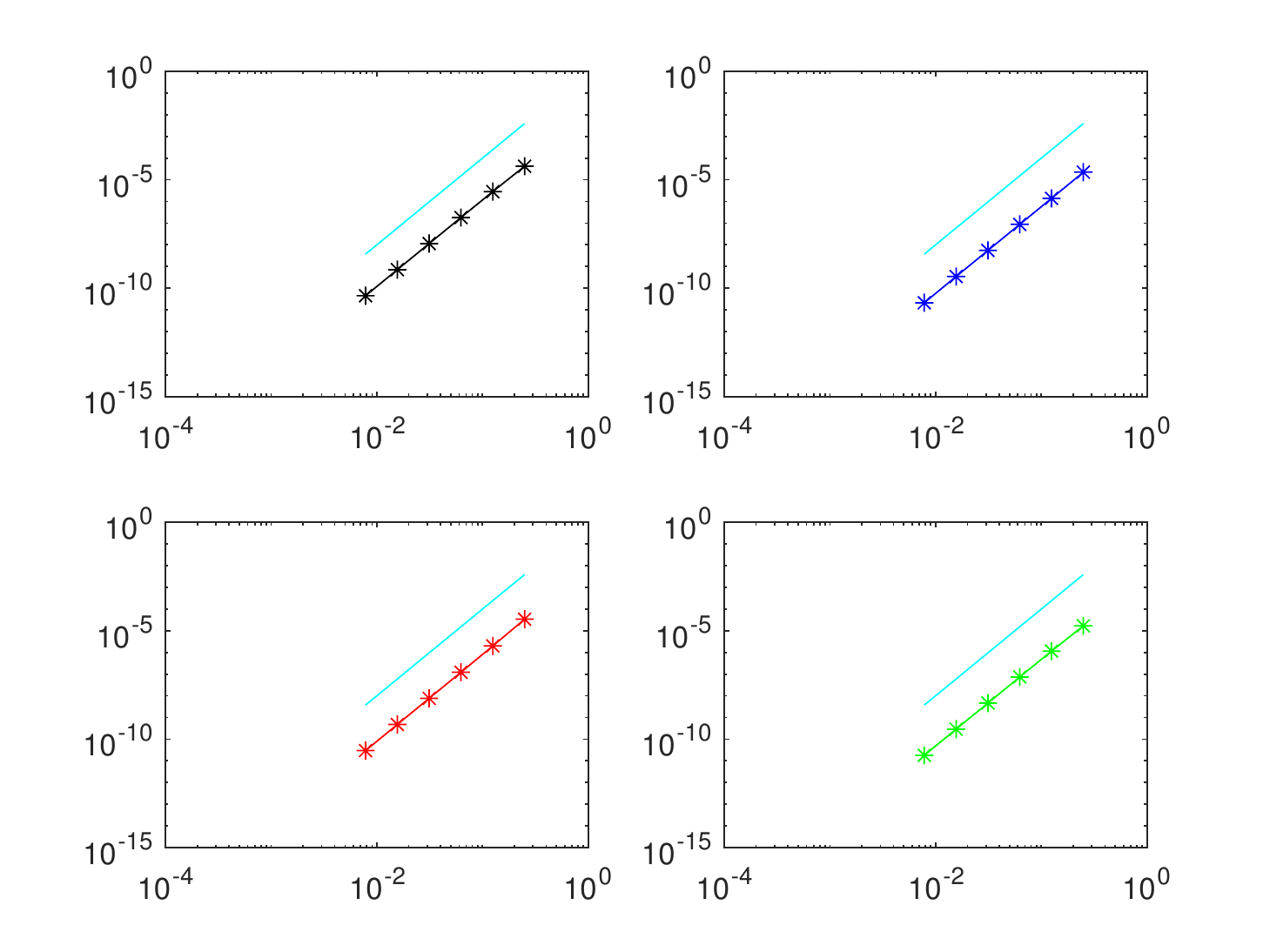}\\
\caption{Global errors of the numerical solutions by RKN-IIIB
method(black line), RKN-Diagsymp method (blue line), RKN-A method
(red line) and RKN-B method (green line) for the perturbed pendululm
equation \eqref{eq:second1}. The reference line has slope 4 in every
subplots.}\label{f0}
\end{figure}

\begin{figure}[htbp]
\includegraphics[height=80mm,width=\textwidth]{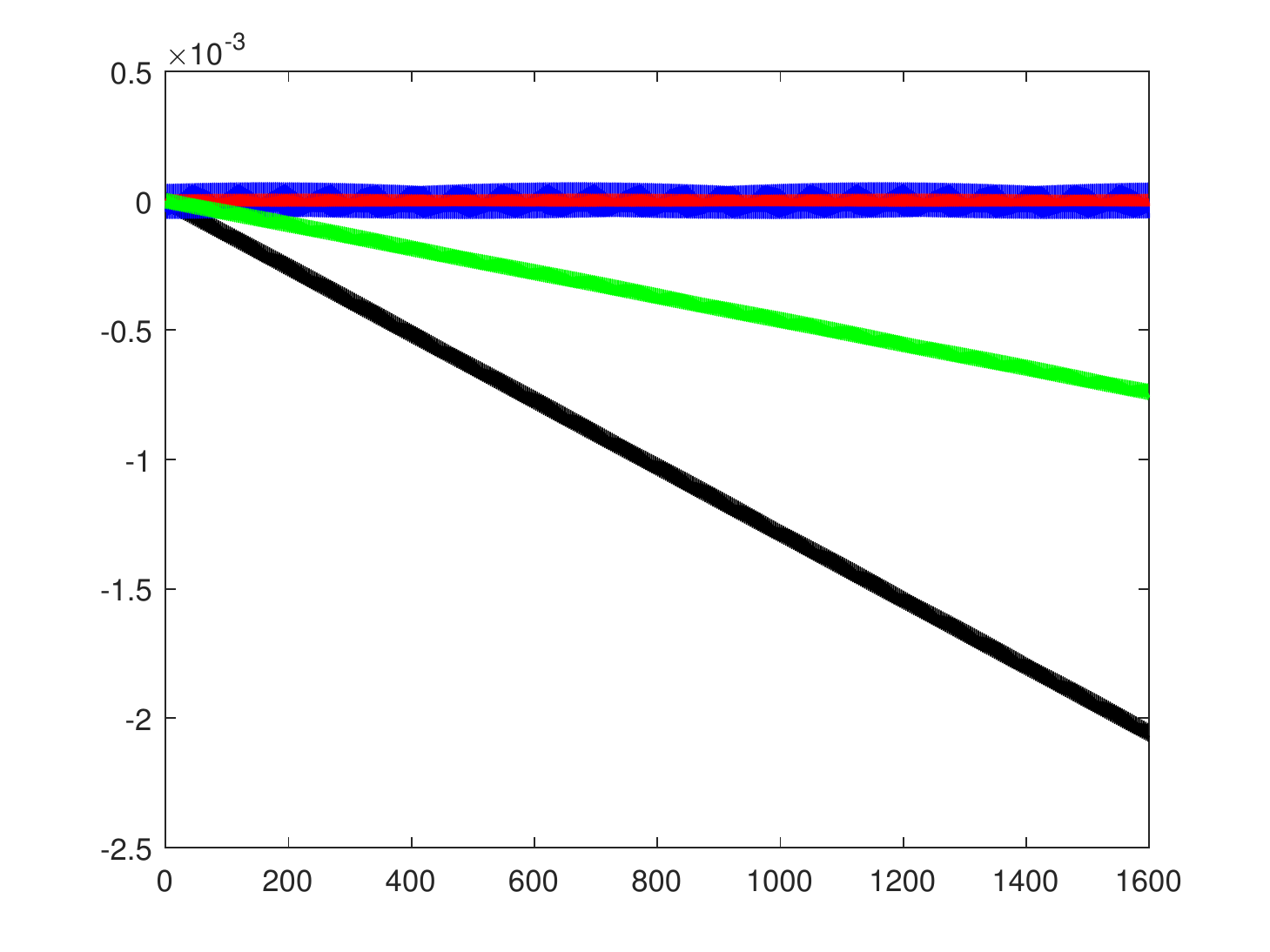}\\
\caption{Energy errors of the numerical solutions by RKN-IIIB
method(black line), RKN-Diagsymp method (blue line), RKN-A method
(red line) and RKN-B method (green line) for the perturbed pendululm
equation \eqref{eq:second1}: step size $h=0.16,$ integration
interval $[0, 1600].$}\label{f1}
\end{figure}

\begin{figure}[htbp]
\includegraphics[height=80mm,width=\textwidth]{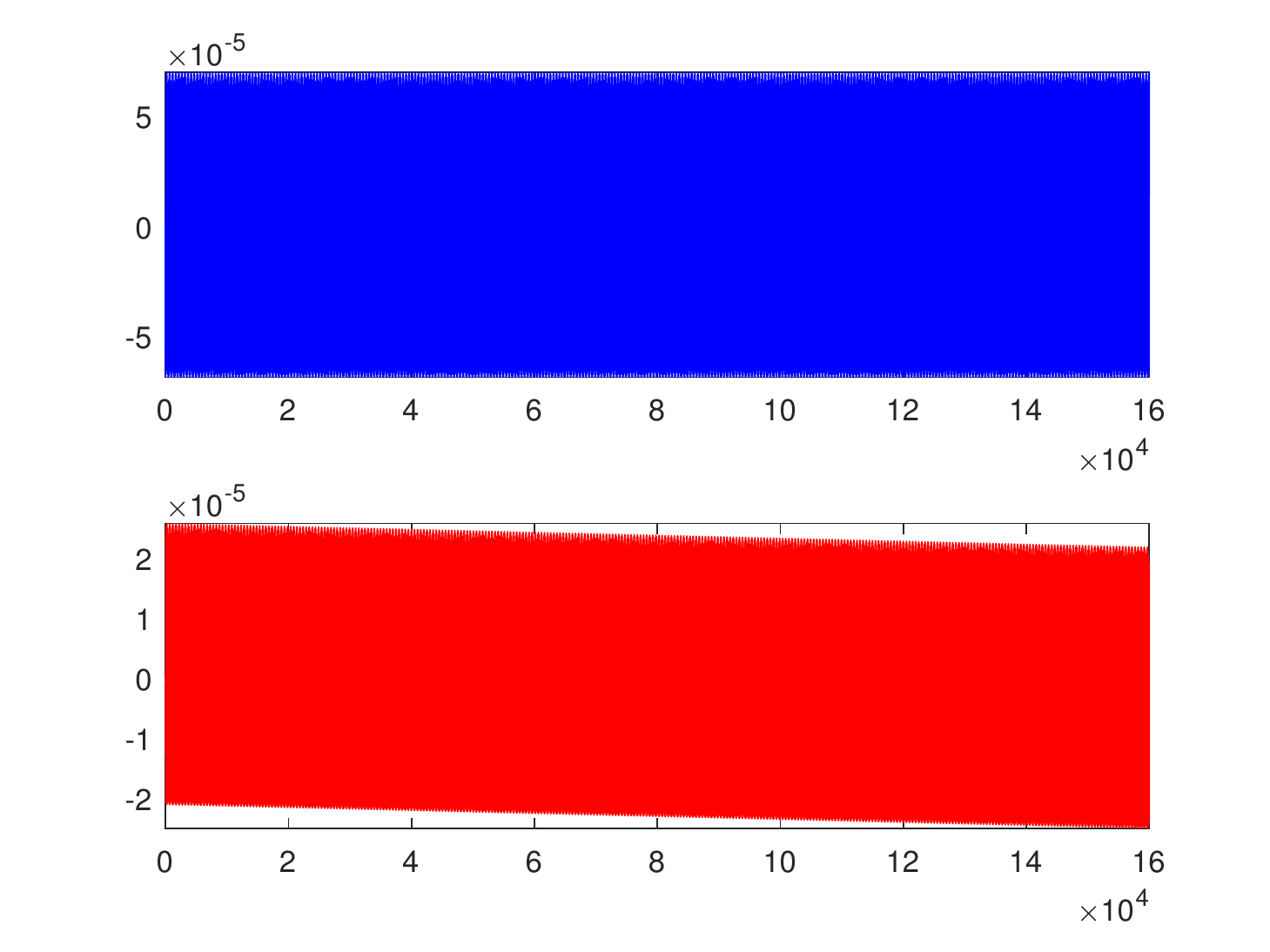}\\
\caption{Energy errors of the numerical solutions by RKN-Diagsymp
method (blue line), and RKN-A method (red line) for the perturbed
pendululm equation \eqref{eq:second1}: step size $h=0.16,$
integration interval $[0, 1.6 \times 10^5].$}\label{f2}
\end{figure}

Global errors of the numerical solutions by the above four methods
with six small step sizes are shown in Fig. \ref{f0} with log-log
scales, which verifies the order of all the methods. From Fig.
\ref{f1}, it is seen that RKN-IIIB method and RKN-B method produce
obvious energy drifts, though these methods are symmetric. This
shows that not all symmetric RKN methods nearly preserve the energy
over long times even if the system is reversible
--- this observation has been shown for symmetric Runge-Kutta methods
in \cite{Faouhp04ecw}. It is observed that the energy error keeps
bounded for the RKN-Diagsymp method. Besides, it seems that the
non-symplectic RKN-A method gives a ``better" behavior. However,
when we integrate the system on a much longer time interval $[0,
1.6\times 10^5]$, it gives a worse result (energy drift) compared
with the RKN-Diagsymp method (see Fig. \ref{f2}). From these
numerical tests we may conclude that symplectic-structure
preservation is more essential than the reversibility preservation
of the reversible Hamiltonian systems in long-term numerical
simulation. Nevertheless, for general reversible non-Hamiltonian
systems, symmetric methods are also preferable.

\section{Concluding remarks}

We develop symmetric integrators by means of continuous-stage
Runge-Kutta-Nystr\"{o}m (csRKN) methods in this paper. The crucial
technique based on Legendre polynomial expansion combining with the
symmetric conditions and order conditions is fully utilized. As
illustrative examples, new families of symmetric integrators (most
of them are also symplectic) are derived in use of Gaussian-type and
Lobatto-type quadrature formulas. It is worth observing that other
quadrature formulas can also be considered for devising symmetric
integrators and more free parameters can be led into the formalism
of the Butcher coefficients.

\section*{Acknowledgements}

The first author was supported by the National Natural Science
Foundation of China (11401055), China Scholarship Council
(No.201708430066) and Scientific Research Fund of Hunan Provincial
Education Department (15C0028). The second author was supported by
the foundation of NSFC (No. 11201125, 11761033) and PhD scientific
research foundation of East China Jiaotong University.

\end{document}